\crefname{equation}{}{}
\begin{document}
  \PrintTitleAndAbstract

  \section{Introduction}

\subsection{Motivation}

Semidefinite Programming (SDP) is an important class of optimization problems.
The standard methods for solving SDP problems are Interior-Point Methods
(IPMs)~\cite{Nesterov.Nemirovskii-94-InteriorPoint}.
These methods are based on Newton steps and are very efficient for small- and
medium-size problems.
In many cases, IPMs are able to find an approximate solution with
a high accuracy in several dozen of iterations.
However, IPMs have a significant drawback: they cannot be used for large-scale
problems that often arise in modern applications and for which computing even
one Newton step becomes too expensive.

The only way to solve large-scale SDP problems is to use first-order methods
relying on matrix-vector products.
Compared to IPMs, these methods have much cheaper iterations and compute less
accurate solutions.
However, the accuracy is usually not a problem since, in the majority of
applications involving large-scale problems, there is no need for high accuracy.

In this paper, we develop new algorithms for solving optimization
problems in \emph{relative scale} with provable efficiency guarantees.
In contrast to already existing approaches, our methods can work
with inexact (possibly stochastic) information about the objective function.
\subsection{Notation and Generalities}
\label{sec:NotationAndGeneralities}

In what follows, we denote by $\VectorSpace{E}$ a finite-dimensional real
vector space, and by $\VectorSpace{E}\Dual$ its dual space, formed by all
linear functions on $\VectorSpace{E}$.
The value of function $s \in \VectorSpace{E}\Dual$ at point $x \in
\VectorSpace{E}$ is denoted by $\DualPairing{s}{x}$.

Given a self-adjoint positive semidefinite linear
operator~$\Map{B}{\VectorSpace{E}}{\VectorSpace{E}\Dual}$,
we can define the following Euclidean seminorm in~$\VectorSpace{E}$:
\begin{equation}
  \label{eq:EuclideanSeminorm}
  \RelativeNorm{x}{B} \DefinedEqual \DualPairing{B x}{x}^{1 / 2},
  \qquad
  x \in \VectorSpace{E}.
\end{equation}
An important subspace for the seminorm~$\RelativeNorm{\cdot}{B}$ is
the kernel~$\ker B$.
For any $x \in \VectorSpace{E}$, it holds that
$\RelativeNorm{x}{B} = 0$ iff $x \in \ker B$.
Hence, the seminorm~$\RelativeNorm{\cdot}{B}$ is a norm iff $B$ is nondegenerate.
More generally, for (any) complementary
subspace~$(\ker B)^c \subseteq \VectorSpace{E}$ to~$\ker B$,
the restriction of~$\RelativeNorm{\cdot}{B}$ onto~$(\ker B)^c$
is a norm.
Note that $\RelativeNorm{\cdot}{B}$ is constant along~$\ker B$:
\begin{equation}
  \label{eq:SeminormIsConstantAlongKernel}
  \RelativeNorm{x + h}{B}
  =
  \RelativeNorm{x}{B},
  \qquad
  \forall x \in \VectorSpace{E}, \
  \forall h \in \ker B.
\end{equation}

Each seminorm $\RelativeNorm{\cdot}{B}$ induces the following (generalized) dual
norm in~$\VectorSpace{E}\Dual$:
\begin{equation}
  \label{eq:DualNorm}
  \RelativeDualNorm{s}{B}
  \DefinedEqual
  \sup_{x \in \VectorSpace{E}}
  \SetBuilder{\DualPairing{s}{x}}{\RelativeNorm{x}{B} \leq 1},
  \qquad
  s \in \VectorSpace{E}\Dual.
\end{equation}
Strictly speaking, $\RelativeDualNorm{\cdot}{B}$ is not a norm
in~$\VectorSpace{E}\Dual$, as it can take infinite values at certain points:%
\footnote{
  Hereinafter,
  for a linear subspace~$\VectorSpace{L} \subseteq \VectorSpace{E}$,
  $
    \VectorSpace{L}\OrthogonalComplement
    \DefinedEqual
    \SetBuilder{
      s \in \VectorSpace{E}\Dual
    }{
      \DualPairing{s}{x} = 0, \ \forall x \in \VectorSpace{L}
    }
  $
  denotes the orthogonal complement of~$\VectorSpace{L}$ in~$\VectorSpace{E}$.
}
\begin{equation}
  \label{eq:FinitenessOfRelativeDualNorm}
  \RelativeDualNorm{s}{B} < +\infty
  \iff
  s \in (\ker B)\OrthogonalComplement,
  \qquad
  \forall s \in \VectorSpace{E}\Dual.
\end{equation}
Nevertheless, when restricted to $(\ker B)\OrthogonalComplement$,
$\RelativeDualNorm{\cdot}{B}$ is indeed a norm.
In the special case when $B$ is nondegenerate,
$(\ker B)\OrthogonalComplement = \VectorSpace{E}\Dual$
and $\RelativeDualNorm{\cdot}{B}$ becomes a norm in~$\VectorSpace{E}\Dual$
given by $\RelativeDualNorm{s}{B} = \DualPairing{s}{B^{-1} s}^{1 / 2}$
for all $s \in \VectorSpace{E}\Dual$.
The following identity is often useful:
\begin{equation}
  \label{eq:SquaredDualNorm}
  \frac{1}{2} \RelativeDualNorm{s}{B}^2
  =
  \sup_{x \in \VectorSpace{E}} \Bigl[
    \DualPairing{s}{x} - \frac{1}{2} \RelativeNorm{x}{B}^2
  \Bigr],
  \qquad
  \forall s \in \VectorSpace{E}\Dual.
\end{equation}

If $\VectorSpace{E} = \RealField^n$, the space of $n$-dimensional real column
vectors, then we often use the standard scalar product
\[
  \InnerProduct{x}{y}
  =
  x\Transpose y
  =
  \sum_{i=1}^n x\UpperIndex{i} y\UpperIndex{i},
  \quad
  x, y \in \RealField^n.
\]
For $x \in \RealField^n$, the standard Euclidean norm is defined as
$\Norm{x} = \InnerProduct{x}{x}^{1/2}$.

The standard Euclidean sphere in~$\RealField^n$ is denoted
by~$\UnitSphere{n - 1}$.
The cone of $n$-dimensional vectors with nonnegative coordinates is denoted
by~$\NonnegativeRay^n$.
If all the coordinates are strictly positive, we use the
notation~$\PositiveRay^n$.

The notation~$\RealMatrices{m}{n}$ is used for the space of real
$m \times n$ matrices equipped with the standard Frobenius inner product:
\[
  \InnerProduct{X}{Y}
  =
  \sum_{i=1}^n \sum_{j=1}^m X\UpperIndex{i, j} Y\UpperIndex{i, j},
  \quad
  X, Y \in \RealMatrices{m}{n}.
\]
For $X \in \RealMatrices{m}{n}$, its standard Frobenius norm is defined as
$\FrobeniusNorm{X} = \InnerProduct{X}{X}^{1/2}$.

For the space of symmetric $n \times n$ matrices, we use notation
$\SymmetricMatrices{n}$.
All eigenvalues of a matrix~$X \in \SymmetricMatrices{n}$ are real,
and we denote by $\MaxEigenValue(X)$ and $\MinEigenValue(X)$
the maximal and the minimal ones, respectively.
A matrix~$X \in \SymmetricMatrices{n}$ is positive semidefinite iff
$\MinEigenValue(X) \geq 0$ (notation $X \succeq 0$).
The convex cone of $n \times n$ real positive semidefinite matrices is denoted
by~$\SymmetricPsdMatrices{n}$.

For each matrix~$A \in \RealMatrices{m}{n}$, we can construct two ``squares''
of~$A$---the matrices $A A\Transpose \in \SymmetricPsdMatrices{m}$
and~$A\Transpose A \in \SymmetricPsdMatrices{n}$---that share
$r \DefinedEqual \min\Set{m, n}$ common nonnegative eigenvalues.
The square roots of these common eigenvalues---the numbers
$\sigma_1, \ldots, \sigma_r \in \NonnegativeRay$---are called
the \emph{singular values} of~$A$.
The largest of them is the \emph{maximal singular value} of~$A$
denoted by~$\MaxSingularValue(A)$.

Singular values are often used for defining matrix norms.
For a matrix~$A \in \RealMatrices{m}{n}$
and a real~$p \in \ClosedClosedInterval{0}{\infty}$,
the \emph{Schatten $p$-norm} of~$A$, denoted by $\SchattenNorm{A}{p}$,
is defined as the $\ell_p$-norm of the vector of singular values of~$A$.
An important example is the Schatten infinity-norm also known as the
\emph{spectral norm}:
\[
  \SchattenNorm{A}{\infty}
  \DefinedEqual
  \MaxSingularValue(A)
  =
  \max_{\Norm{x} = 1} \Norm{A x}.
\]
  \section{Optimization in Relative Scale}

\subsection{Gradient Method with Relatively Inexact Stochastic Oracle}
\label{sec:GradientMethodWithRelativelyInexactStochasticOracle}
\UsingNamespace{GradientMethodWithRelativelyInexactStochasticOracle}

Consider the following optimization problem:
\begin{equation}
  \LocalLabel{eq:Problem}
  f^* \DefinedEqual \min_{x \in Q} f(x),
\end{equation}
where $\Map{f}{\VectorSpace{E}}{\RealField}$ is a convex function
and $Q \subseteq \VectorSpace{E}$ is a nonempty convex set.
We assume that this problem is well-posed in the sense that it admits a
solution.

For measuring distances in the space~$\VectorSpace{E}$, we will use the
Euclidean seminorm~$\RelativeNorm{\cdot}{B}$, where
$\Map{B}{\VectorSpace{E}}{\VectorSpace{E}\Dual}$ is a fixed self-adjoint
positive semidefinite linear operator.

Our main assumptions on problem~\eqref{\LocalName{eq:Problem}} are as follows.
First, we assume that the objective function is consistent with the seminorm,
in the sense that there exists a point $\hat{x}_0 \in Q$
and a constant $\gamma_0 > 0$ such that
\begin{equation}
  \LocalLabel{eq:FunctionIsConsistent}
  f(x) \geq \gamma_0 \RelativeNorm{x - \hat{x}_0}{B}^2,
  \qquad
  \forall x \in Q.
\end{equation}
Second, we assume we have access to a stochastic gradient oracle for the
objective function, specified by a random
variable~$\xi \DistributedAs P_{\xi}$ taking values in a certain set~$S_{\xi}$
and a mapping~$\Map{g}{\VectorSpace{E} \times S_{\xi}}{\VectorSpace{E}\Dual}$.
We assume that the stochastic gradient oracle may be biased but, on average,
the corresponding bias is uniformly bounded in relative scale:
there exists $\delta \in \ClosedOpenInterval{0}{1}$ such that
\begin{equation}
  \LocalLabel{eq:ApproximateSubgradient}
  f(y)
  \geq
  (1 - \delta) f(x) + \DualPairing{\Expectation_{\xi} [g(x, \xi)]}{y - x},
  \qquad
  \forall x, y \in \VectorSpace{E}.
\end{equation}
We also assume that the magnitude of stochastic subgradients is relatively
bounded w.r.t.\ the function~$f$: for some $L > 0$,
\begin{equation}
  \LocalLabel{eq:StochasticSubgradientIsConsistent}
  \Expectation_{\xi} [\RelativeDualNorm{g(x, \xi)}{B}^2] \leq 2 L f(x),
  \qquad
  \forall x \in \VectorSpace{E}.
\end{equation}
The point~$x_0$ and the constants~$\gamma_0$, $\delta$ and~$L$ are supposed to
be known.
Finally, we need the following technical assumption to guarantee that
problem~\eqref{\LocalName{eq:Problem}}, as well as certain auxiliary subproblems
arising in the method, are well-posed.
\begin{assumption}
  \LocalLabel{as:ClosednessOfSet}
  The set $Q + \ker B$ is closed.
\end{assumption}
\Cref{\LocalName{as:ClosednessOfSet}} is satisfied, in particular, when $Q$ is
closed and $B$ is nondegenerate, or when $Q$ is an affine subspace
(and $B$ is arbitrary).

The main auxiliary operation in our method will be the following gradient step:
\begin{equation}
  \LocalLabel{eq:GradientStep}
  T_Q(\bar{x}, g)
  \DefinedEqual
  \argmin_{x \in Q} \Bigl\{
    \DualPairing{g}{x} + \frac{1}{2} \RelativeNorm{x - \bar{x}}{B}^2
  \Bigr\},
  \qquad
  \bar{x} \in \VectorSpace{E}, \
  g \in (\ker B)\OrthogonalComplement.
\end{equation}
Note that problem~\eqref{\LocalName{eq:GradientStep}} may have
multiple solutions (when~$B$ is degenerate);
in this case, we allow $T_Q(\bar{x}, g)$ to be chosen arbitrarily among them.
Nevertheless, a solution to~\eqref{\LocalName{eq:GradientStep}} always exists.

\begin{lemma}
  \LocalLabel{th:CharacterizationOfGradientStep}
  \UsingNestedNamespace{GradientMethodWithRelativelyInexactStochasticOracle}{CharacterizationOfGradientStep}
  Under \cref{GradientMethodWithRelativelyInexactStochasticOracle::as:ClosednessOfSet},
  the point $T \DefinedEqual T_Q(\bar{x}, g)$ is well-defined
  for any $\bar{x} \in \VectorSpace{E}$
  and any $g \in (\ker B)\OrthogonalComplement$,
  in the sense that problem~\eqref{\MasterName{eq:GradientStep}} has a solution.
  This point is characterized by the following equivalent optimality
  conditions:
  \begin{gather}
    \LocalLabel{eq:SubgradientOptimalityCondition}
    \DualPairing{g + B (T - \bar{x})}{x - T} \geq 0,
    \qquad
    \forall x \in Q,
    \\
    \LocalLabel{eq:FunctionalOptimalityCondition}
    \DualPairing{g}{x - T} + \frac{1}{2} \RelativeNorm{x - \bar{x}}{B}^2
    \geq
    \frac{1}{2} \RelativeNorm{T - \bar{x}}{B}^2
    +
    \frac{1}{2} \RelativeNorm{x - T}{B}^2,
    \qquad
    \forall x \in Q.
  \end{gather}
\end{lemma}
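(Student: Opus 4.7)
The plan has three steps: (i) show $T$ exists; (ii) derive the subgradient condition from standard first-order optimality; (iii) convert it to the functional form via a three-point identity.

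For existence, I would start by observing that the objective
$\phi(x) \DefinedEqual \DualPairing{g}{x} + \frac{1}{2}\RelativeNorm{x - \bar{x}}{B}^2$
is convex, and is constant along $\ker B$: indeed, the assumption $g \in (\ker B)\OrthogonalComplement$ gives $\DualPairing{g}{h} = 0$ for every $h \in \ker B$, while $\RelativeNorm{\cdot - \bar{x}}{B}$ is constant along $\ker B$ by \cref{eq:SeminormIsConstantAlongKernel}. Then, in direct analogy with the proof of \cref{th:ProblemHasSolution}, I would restrict $\phi$ to a complementary subspace $(\ker B)^c$ on which $\RelativeNorm{\cdot}{B}$ is a genuine norm, combine the reverse triangle inequality with the bound $\Abs{\DualPairing{g}{x}} \leq \RelativeDualNorm{g}{B} \RelativeNorm{x}{B}$ (finite since $g \in (\ker B)\OrthogonalComplement$) to conclude that $\phi$ has bounded sublevel sets on $(\ker B)^c$, and then invoke \cref{as:ClosednessOfSet} together with the same existence theorem used in the previous lemma to obtain a minimizer.

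For the subgradient condition, the objective $\phi$ is differentiable with $\nabla \phi(x) = g + B(x - \bar{x})$, so the standard first-order necessary and sufficient optimality condition for minimizing a smooth convex function over a convex set is
$\DualPairing{\nabla \phi(T)}{x - T} \geq 0$ for all $x \in Q$,
which is exactly the first of the two stated inequalities.

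For the equivalence with the functional form, the key tool is the three-point identity
\[
  \tfrac{1}{2} \RelativeNorm{x - \bar{x}}{B}^2
  =
  \tfrac{1}{2} \RelativeNorm{T - \bar{x}}{B}^2
  +
  \DualPairing{B(T - \bar{x})}{x - T}
  +
  \tfrac{1}{2} \RelativeNorm{x - T}{B}^2,
\]
which follows from expanding $\RelativeNorm{(x - T) + (T - \bar{x})}{B}^2$ and using self-adjointness of $B$. Substituting this identity to eliminate the cross term $\DualPairing{B(T - \bar{x})}{x - T}$ in the subgradient condition and rearranging yields precisely the functional condition; since the identity is an equality, the implication reverses, giving full equivalence of the two conditions (and in particular each is equivalent to $T$ being a minimizer). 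I do not anticipate any genuine obstacle here: the whole argument reduces to textbook manipulations, with the only delicate point being the seminorm/kernel handling in the existence step, which is already dispatched by recycling the technique from \cref{th:ProblemHasSolution}.
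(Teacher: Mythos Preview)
Your proposal is correct and follows essentially the same route as the paper: existence via constancy of $\phi$ along $\ker B$, bounded sublevel sets on $(\ker B)^c$, and \cref{\LocalName{as:ClosednessOfSet}} feeding into \cref{th:ExistenceOfMinimizerOnSubspace}; then the standard first-order condition and the three-point identity for the equivalence. The only cosmetic difference is that you spell out the bounded-sublevel-set argument (reverse triangle inequality plus $\Abs{\DualPairing{g}{x}} \leq \RelativeDualNorm{g}{B}\RelativeNorm{x}{B}$) where the paper simply asserts it.
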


\begin{proof}
  \UsingNestedNamespace{GradientMethodWithRelativelyInexactStochasticOracle::CharacterizationOfGradientStep}{Proof}
  Let $\Map{\phi}{\VectorSpace{E}}{\RealField}$ be the function
  $
    \phi(x)
    \DefinedEqual
    \DualPairing{g}{x} + \frac{1}{2} \RelativeNorm{x - \bar{x}}{B}^2
  $.
  Clearly, $\phi$ is closed.
  Also, $\phi$ is constant along~$\ker B$
  (i.e., $\phi(x + h) = \phi(x)$ for all~$x \in \VectorSpace{E}$
  and all~$h \in \ker B$)
  since so is $\RelativeNorm{\cdot}{B}$ (see \cref{sec:NotationAndGeneralities})
  and since $g \in (\ker B)\OrthogonalComplement$.
  Further, it is not difficult to see that
  the restriction of~$\phi$ onto~$(\ker B)^c$
  (a complementary subspace to~$\ker B$)
  has bounded sublevel sets as $\RelativeNorm{\cdot}{B}$
  is a norm on~$(\ker B)^c$
  (rather than a seminorm, see \cref{sec:NotationAndGeneralities}).
  In particular, $\phi$ restricted to any subset of~$(\ker B)^c$
  also has bounded sublevel sets.
  Applying now \cref{th:ExistenceOfMinimizerOnSubspace}
  (taking into account \cref{GradientMethodWithRelativelyInexactStochasticOracle::as:ClosednessOfSet}),
  we conclude that $\phi$ has a minimizer on~$Q$,
  and thus the point~$T$ is well-defined.

  Inequality~\eqref{\MasterName{eq:SubgradientOptimalityCondition}}
  is the standard first-order optimality condition:
  a point~$T \in Q$ is a minimizer of a differentiable convex function~$\phi$
  on a convex set~$Q$ iff
  $\DualPairing{\Gradient \phi(T)}{x - T} \geq 0$ for all $x \in Q$.
  Inequality~\eqref{\MasterName{eq:FunctionalOptimalityCondition}} is equivalent
  to~\eqref{\MasterName{eq:SubgradientOptimalityCondition}} in view of the
  identity
  \[
    \DualPairing{B (T - \bar{x})}{T - x}
    =
    \frac{1}{2} \RelativeNorm{T - \bar{x}}{B}^2
    +
    \frac{1}{2} \RelativeNorm{x - T}{B}^2
    -
    \frac{1}{2} \RelativeNorm{x - \bar{x}}{B}^2
  \]
  which can be easily verified directly.
\end{proof}

Let us present our method for finding an approximate solution to
problem~\eqref{\LocalName{eq:Problem}} in relative scale.

\begin{SimpleAlgorithm}[
  title = {Gradient Method with Relatively Inexact Stochastic Oracle},
  label = \LocalName{alg:Algorithm},
  width = 0.75\textwidth
]
  \begin{AlgorithmGroup}[Input]
    Stochastic oracle~$g$, initial point~$x_0 \in Q$, constants~$L, \delta > 0$.
  \end{AlgorithmGroup}
  \AlgorithmGroupSeparator
  \begin{AlgorithmGroup}
    \UsingNestedNamespace{GradientMethodWithRelativelyInexactStochasticOracle}{Algorithm}
    \begin{AlgorithmSteps}
      \AlgorithmStep
        Set $v_0 \DefinedEqual x_0$, $C_0 \DefinedEqual 0$ ($\in \RealField$).
      \AlgorithmStep
        Iterate for $k \geq 0$:
        \begin{AlgorithmSteps}
          \AlgorithmStep
            \LocalLabel{step:ComputeStochasticSubgradient}
            Compute stochastic gradient $g_k \DefinedEqual g(v_k, \xi_k)$,
            where $\xi_k \DistributedAs P_{\xi}$.
          \AlgorithmStep
            \LocalLabel{step:ChooseStepSize}
            Choose step size $a_k \in \OpenOpenInterval{0}{(1 - \delta) / L}$
            in a deterministic way.
          \AlgorithmStep
            \LocalLabel{step:ComputeNewPoint}
            Compute coefficients
            $c_k \DefinedEqual a_k (1 - \delta - L a_k)$,
            $C_{k + 1} \DefinedEqual C_k + c_k$ and
            the new output point
            $x_{k + 1} \DefinedEqual (C_k x_k + c_k v_k) / C_{k + 1}$.
          \AlgorithmStep
            \LocalLabel{step:UpdateProxCenter}
            Update prox center $v_{k + 1} \DefinedEqual T_Q(v_k, a_k g_k)$.
        \end{AlgorithmSteps}
    \end{AlgorithmSteps}
  \end{AlgorithmGroup}
\end{SimpleAlgorithm}

\Cref{\LocalName{alg:Algorithm}} constructs a sequence of random
points~$(x_k)_{k = 1}^{\infty}$ each of which depends on the realization of
i.i.d.\ random variables~$\xi_1, \ldots, \xi_k$.
Recall from \cref{\LocalName{eq:GradientStep}} that, in order for
\cref{\LocalName{Algorithm::step:UpdateProxCenter}} in this method
to be well-defined, the stochastic subgradient~$g_k$ should
belong to the subspace~$(\ker B)\OrthogonalComplement$
at each iteration $k \geq 0$.
Let us show that this is indeed the case, and follows from
assumption~\eqref{\LocalName{eq:StochasticSubgradientIsConsistent}}.

\begin{lemma}
  \LocalLabel{th:AlgorithmIsWellDefined}
  In \cref{\LocalName{alg:Algorithm}}, we have
  $g_k \in (\ker B)\OrthogonalComplement$ (a.s.) for all $k \geq 0$.
  Thus, at each iteration~$k \geq 0$, the computation of~$v_{k + 1}$ is
  well-defined (a.s.).
\end{lemma}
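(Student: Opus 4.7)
The plan is to argue by induction on $k$ and reduce everything to the deterministic fact that, for any fixed $x \in \VectorSpace{E}$, the stochastic gradient $g(x,\xi)$ lies in $(\ker B)\OrthogonalComplement$ almost surely. This deterministic fact is essentially already present in the proof of \cref{StochasticGradientMethod::th:ProblemHasSolution}: the consistency assumption \eqref{StochasticGradientMethod::eq:StochasticSubgradientIsConsistent} gives $\Expectation_{\xi}[\RelativeDualNorm{g(x,\xi)}{B}^2] \leq 2 L f(x) < +\infty$, so $\RelativeDualNorm{g(x,\xi)}{B} < +\infty$ almost surely, and then \cref{eq:FinitenessOfRelativeDualNorm} yields $g(x,\xi) \in (\ker B)\OrthogonalComplement$ almost surely.

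The only real subtlety is that in the algorithm, $g_k = g(v_k, \xi_k)$ is evaluated at a \emph{random} point $v_k$, not a fixed one. I would handle this by an induction that propagates two properties jointly for $k \geq 0$: (i) $v_k$ is a well-defined random element of $\VectorSpace{E}$ measurable with respect to $\sigma(\xi_0, \ldots, \xi_{k-1})$, and (ii) $g_k \in (\ker B)\OrthogonalComplement$ almost surely. For the base case $k = 0$, the point $v_0 = x_0$ is deterministic, so (i) is immediate and (ii) follows from the deterministic fact above applied at $x = x_0$. For the inductive step, I would use that $\xi_k \DistributedAs P_{\xi}$ is independent of $v_k$; then by conditioning on $v_k$ and applying the deterministic fact pointwise (i.e.\ Fubini on the product measure of $v_k$ and $\xi_k$), we get $g(v_k, \xi_k) \in (\ker B)\OrthogonalComplement$ almost surely, giving (ii). Property (i) at step $k+1$ then follows because \cref{StochasticGradientMethod::th:CharacterizationOfGradientStep} guarantees that $T_Q(v_k, a_k g_k)$ is well-defined whenever $a_k g_k \in (\ker B)\OrthogonalComplement$, and $a_k$ is chosen deterministically.

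The main obstacle, such as it is, is the measure-theoretic step that lifts the pointwise-in-$x$ statement to the case of a random argument $v_k$. This is routine provided one invokes independence of $\xi_k$ from $(\xi_0,\ldots,\xi_{k-1})$ (hence from $v_k$) and basic measurability of $g$; I would cite the independence built into the algorithm's description and not belabor the Fubini argument. Everything else is bookkeeping: once $g_k \in (\ker B)\OrthogonalComplement$ a.s., the final sentence of the lemma is just an application of \cref{StochasticGradientMethod::th:CharacterizationOfGradientStep} to the pair $(v_k, a_k g_k)$.
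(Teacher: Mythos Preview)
Your proposal is correct and follows the same route as the paper: apply the consistency bound \eqref{StochasticGradientMethod::eq:StochasticSubgradientIsConsistent} at the point $v_k$ (using independence of $\xi_k$ from the past, i.e.\ conditioning on $v_k$) to get $\RelativeDualNorm{g_k}{B} < +\infty$ a.s., invoke \cref{eq:FinitenessOfRelativeDualNorm}, and then appeal to \cref{StochasticGradientMethod::th:CharacterizationOfGradientStep}. The only difference is cosmetic: the paper writes the argument for an arbitrary $k$ in a single line using $\Expectation_{\xi_k}[\cdot]$, leaving the induction (that $v_k$ itself is well-defined) implicit, whereas you spell out both the induction and the Fubini/conditioning step explicitly.
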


\begin{proof}
  Let $k \geq 0$ be arbitrary.
  According to the definition of $g_k$ at
  \cref{GradientMethodWithRelativelyInexactStochasticOracle::Algorithm::step:ComputeStochasticSubgradient}
  and
  \cref{\LocalName{eq:StochasticSubgradientIsConsistent}},
  \[
    \Expectation_{\xi_k} \RelativeDualNorm{g_k}{B}^2
    =
    \Expectation_{\xi_k} \RelativeDualNorm{g(v_k, \xi_k)}{B}^2
    \leq
    2 L f(v_k)
    <
    +\infty.
  \]
  This means that $\RelativeDualNorm{g_k}{B} < +\infty$ (a.s.).
  Hence, in view of \cref{eq:FinitenessOfRelativeDualNorm},
  $g_k \in (\ker B)\OrthogonalComplement$ (a.s.).
  Thus, $a_k g_k \in (\ker B)\OrthogonalComplement$ (a.s.),
  and hence, by \cref{\LocalName{th:CharacterizationOfGradientStep}},
  $v_{k + 1}$ is well-defined (a.s.).
\end{proof}

Observe that, by definition, for each $k \geq 0$, the point~$x_{k + 1}$
is a convex combination of~$x_k$ and~$v_k$.
Since the set~$Q$ is convex, and $x_0 = v_0 \in Q$ and $v_k \in Q$
(by its definition at~\cref{\LocalName{Algorithm::step:UpdateProxCenter}})
for all $k \geq 0$, we therefore have $x_{k + 1} \in Q$ for all $k \geq 0$.
Thus, the points~$(x_k)_{k = 1}^\infty$ constructed by
\cref{\LocalName{alg:Algorithm}} are all feasible.

Let us now establish a general convergence guarantee for
\cref{\LocalName{alg:Algorithm}}, which is valid for any choice
of the coefficients~$(a_k)_{k = 0}^\infty$.

\begin{lemma}
  \LocalLabel{th:ConvergenceRate}
  \UsingNestedNamespace{GradientMethodWithRelativelyInexactStochasticOracle}{ConvergenceRate}
  In \cref{\MasterName{alg:Algorithm}},
  at any iteration $k \geq 1$, we have, for all $x \in Q$,
  \begin{equation}
    \LocalLabel{eq:PreliminaryGuarantee}
    \Bigl[ \, \sum_{i = 0}^{k - 1} a_i (1 - \delta - L a_i) \Bigr]
    \Expectation [f(x_k)]
    \leq
    \frac{1}{2} \RelativeNorm{x - x_0}{B}^2
    +
    \Bigl[ \, \sum_{i = 0}^{k - 1} a_i \Bigr] f(x).
  \end{equation}
  Furthermore, if $x_0 = \hat{x}_0$
  (where $\hat{x}_0 \in Q$ satisfies
  \cref{GradientMethodWithRelativelyInexactStochasticOracle::eq:FunctionIsConsistent}),
  then, at any iteration%
  \footnote{%
    The fact that $\delta_k < 1$ follows from our assumption
    at \cref{GradientMethodWithRelativelyInexactStochasticOracle::Algorithm::step:ChooseStepSize}.
  }
  $k \geq 1$,
  \begin{equation}
    \LocalLabel{eq:FinalGuarantee}
    (1 - \delta_k) \Expectation [f(x_k)] \leq f^*,
    \qquad \text{where} \quad
    \delta_k
    \DefinedEqual
    \frac{
      1 + 2 \gamma_0 \sum_{i = 0}^{k - 1} a_i (\delta + L a_i)
    }{
      1 + 2 \gamma_0 \sum_{i = 0}^{k - 1} a_i
    }
    \quad
    (< 1).
  \end{equation}
\end{lemma}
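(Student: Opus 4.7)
The plan is to derive a one-step inequality from the functional optimality condition of the gradient step, telescope it, and then apply Jensen's inequality via the convex-combination structure of the output sequence $(x_k)$.

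For the first step, I would apply \cref{StochasticGradientMethod::th:CharacterizationOfGradientStep} at iteration $k$ with $\bar{x} = v_k$, $g = a_k g_k$ (which lies in $(\ker B)\OrthogonalComplement$ by \cref{StochasticGradientMethod::th:AlgorithmIsWellDefined}), and test point $x \in Q$ arbitrary. The functional optimality condition yields
\[
  \frac{1}{2}\RelativeNorm{x - v_{k+1}}{B}^2
  \leq
  \frac{1}{2}\RelativeNorm{x - v_k}{B}^2
  + a_k \DualPairing{g_k}{x - v_{k+1}}
  - \frac{1}{2}\RelativeNorm{v_{k+1} - v_k}{B}^2.
\]
Next I would split $\DualPairing{g_k}{x - v_{k+1}} = \DualPairing{g_k}{x - v_k} + \DualPairing{g_k}{v_k - v_{k+1}}$ and bound the second piece by Young's inequality (i.e.\ \cref{eq:SquaredDualNorm}) to kill the $-\frac{1}{2}\RelativeNorm{v_{k+1}-v_k}{B}^2$ term, leaving $\frac{a_k^2}{2}\RelativeDualNorm{g_k}{B}^2$.

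Then I would take conditional expectation with respect to $\xi_k$ given the history. Unbiasedness~\eqref{StochasticGradientMethod::eq:StochasticSubgradientIsUnbiased} and convexity of $f$ give $\Expectation_{\xi_k}\DualPairing{g_k}{x - v_k} = \DualPairing{f'(v_k)}{x - v_k} \leq f(x) - f(v_k)$, while consistency~\eqref{StochasticGradientMethod::eq:StochasticSubgradientIsConsistent} gives $\Expectation_{\xi_k}\RelativeDualNorm{g_k}{B}^2 \leq 2 L f(v_k)$. Taking full expectations, rearranging, and telescoping from $i=0$ to $k-1$ produces
\[
  \sum_{i=0}^{k-1} a_i(1 - L a_i)\Expectation[f(v_i)]
  \leq
  \frac{1}{2}\RelativeNorm{x - x_0}{B}^2
  + \Bigl[\sum_{i=0}^{k-1} a_i\Bigr] f(x),
\]
since the $\frac{1}{2}\Expectation\RelativeNorm{x-v_k}{B}^2$ term on the right is nonnegative and can be dropped and $v_0 = x_0$. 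Finally I would note, by induction on the recursion in \cref{StochasticGradientMethod::Algorithm::step:ComputeNewPoint}, that $x_k = \frac{1}{C_k}\sum_{i=0}^{k-1} c_i v_i$ with $c_i = a_i(1-La_i)$, so Jensen's inequality applied to the convex $f$ gives $C_k \Expectation[f(x_k)] \leq \sum_{i=0}^{k-1} c_i \Expectation[f(v_i)]$, yielding \eqref{StochasticGradientMethod::ConvergenceRate::eq:PreliminaryGuarantee}.

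For the second part, I would choose $x = x^*$, a minimizer of $f$ over $Q$ (guaranteed by \cref{StochasticGradientMethod::th:ProblemHasSolution}). Using $x_0 = \hat{x}_0$ and the consistency assumption \eqref{StochasticGradientMethod::eq:FunctionIsConsistent}, we have $\frac{1}{2}\RelativeNorm{x^* - x_0}{B}^2 \leq \frac{1}{2\gamma_0} f(x^*) = \frac{f^*}{2\gamma_0}$. Substituting into \eqref{StochasticGradientMethod::ConvergenceRate::eq:PreliminaryGuarantee}, multiplying by $2\gamma_0$, and recognising $2\gamma_0 C_k = 2\gamma_0 \sum_{i=0}^{k-1} a_i - 2\gamma_0 L \sum_{i=0}^{k-1} a_i^2 = (1 + 2\gamma_0\sum a_i) - (1 + 2\gamma_0 L \sum a_i^2)$ gives exactly $(1-\delta_k)\Expectation[f(x_k)] \leq f^*$. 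The inequality $\delta_k < 1$ follows immediately from $a_i \in (0, 1/L)$, which makes each $La_i^2 < a_i$.

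The main technical obstacle is a bookkeeping one: carefully handling the fact that $B$ may be degenerate so that $\RelativeNorm{\cdot}{B}$ is only a seminorm. All of the required well-definedness has already been established in the preceding lemmas, so the calculation itself is standard; the only delicate point is invoking \cref{eq:SquaredDualNorm} for $g_k$, which is legitimate precisely because $g_k \in (\ker B)\OrthogonalComplement$ a.s.
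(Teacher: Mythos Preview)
Your proposal is correct and follows essentially the same approach as the paper: the one-step inequality from the functional optimality condition of the gradient step, the Young-type bound via \cref{eq:SquaredDualNorm}, conditional expectation using unbiasedness and consistency, telescoping, and Jensen's inequality through the convex-combination structure of $x_k$. The only cosmetic difference is in the second part: the paper keeps $x \in Q$ arbitrary, bounds $\tfrac{1}{2}\RelativeNorm{x-\hat{x}_0}{B}^2 \leq \tfrac{1}{2\gamma_0}f(x)$, and passes to the infimum at the end, whereas you plug in a minimizer $x^*$ directly; both routes yield the same conclusion.
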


\begin{proof}
  \UsingNestedNamespace{GradientMethodWithRelativelyInexactStochasticOracle::ConvergenceRate}{Proof}

  \ProofPart

  Let $x \in Q$ and $k \geq 0$ be arbitrary.
  By the definition of $v_{k + 1}$
  at \cref{GradientMethodWithRelativelyInexactStochasticOracle::Algorithm::step:UpdateProxCenter}
  and \cref{GradientMethodWithRelativelyInexactStochasticOracle::th:CharacterizationOfGradientStep},
  we have
  \[
    a_k \DualPairing{g_k}{x - v_{k + 1}}
    +
    \frac{1}{2} \RelativeNorm{x - v_k}{B}^2
    \geq
    \frac{1}{2} \RelativeNorm{v_{k + 1} - v_k}{B}^2
    +
    \frac{1}{2} \RelativeNorm{x - v_{k + 1}}{B}^2.
  \]
  Rearranging and using \cref{eq:SquaredDualNorm}, we obtain
  \begin{align*}
    \hspace{2em}&\hspace{-2em}
    \frac{1}{2} \RelativeNorm{x - v_{k + 1}}{B}^2
    -
    \frac{1}{2} \RelativeNorm{x - v_k}{B}^2
    \leq
    a_k \DualPairing{g_k}{x - v_{k + 1}}
    -
    \frac{1}{2} \RelativeNorm{v_{k + 1} - v_k}{B}^2
    \\
    &=
    a_k \DualPairing{g_k}{x - v_k}
    +
    a_k \DualPairing{g_k}{v_{k + 1} - v_k}
    -
    \frac{1}{2} \RelativeNorm{v_{k + 1} - v_k}{B}^2
    \\
    &\leq
    a_k \DualPairing{g_k}{x - v_k}
    +
    \frac{1}{2} a_k^2 \RelativeDualNorm{g_k}{B}^2.
  \end{align*}
  Recall that
  $g_k = g(v_k, \xi_k)$
  (see \cref{GradientMethodWithRelativelyInexactStochasticOracle::Algorithm::step:ComputeStochasticSubgradient}).
  Further, by the construction of \cref{GradientMethodWithRelativelyInexactStochasticOracle::alg:Algorithm},
  $a_k$ is deterministic, while
  $v_k$ and $x_{k + 1}$ are independent of~$\xi_{k + 1}$.
  Therefore, passing to expectations w.r.t.\ $\xi_{k + 1}$ in the above display
  and using
  \cref{%
    GradientMethodWithRelativelyInexactStochasticOracle::eq:ApproximateSubgradient,%
    GradientMethodWithRelativelyInexactStochasticOracle::eq:StochasticSubgradientIsConsistent%
  },
  we get
  \begin{align*}
    \frac{1}{2} \Expectation_{\xi_k} [\RelativeNorm{x - v_{k + 1}}{B}^2]
    -
    \frac{1}{2} \RelativeNorm{x - v_k}{B}^2
    &\leq
    a_k [f(x) - (1 - \delta) f(v_k)] + L a_k^2 f(v_k)
    \\
    &=
    a_k f(x) - c_k f(v_k).
  \end{align*}
  (The final identity follows from the definition of~$c_k$ at
  \cref{GradientMethodWithRelativelyInexactStochasticOracle::Algorithm::step:ComputeNewPoint}.)
  Passing to full expectations and rearranging, we get
  \[
    \Expectation [c_k f(v_k)]
    +
    \frac{1}{2} \Expectation [\RelativeNorm{x - v_{k + 1}}{B}^2]
    \leq
    a_k f(x)
    +
    \frac{1}{2} \Expectation [\RelativeNorm{x - v_k}{B}^2].
  \]
  Note that this inequality is valid for any~$k \geq 0$.

  Summing up the above inequalities for all indices~$0 \leq k' \leq k - 1$,
  where $k \geq 1$ is arbitrary, dropping the
  term~$\Expectation [\RelativeNorm{x - v_k}{B}^2] \geq 0$
  and recalling that $v_0 = x_0$, we obtain
  \[
    \Expectation \Bigl[ \,\sum_{i = 0}^{k - 1} c_i f(v_i) \Bigr]
    \leq
    \Bigl[ \,\sum_{i = 0}^{k - 1} a_i \Bigr] f(x)
    +
    \frac{1}{2} \RelativeNorm{x - x_0}{B}^2.
  \]
  On the other hand, from the definitions at
  \cref{GradientMethodWithRelativelyInexactStochasticOracle::Algorithm::step:ComputeNewPoint},
  it follows that $C_k x_k = \sum_{i = 0}^{k - 1} c_i v_i$
  with
  \begin{equation}
    \LocalLabel{eq:ScalingCoefficient}
    C_k
    =
    \sum_{i = 0}^{k - 1} c_i
    =
    \sum_{i = 0}^{k - 1} a_i (1 - \delta - L a_i)
  \end{equation}
  being a deterministic coefficient
  (since each $a_i$ is assumed to be so).
  Hence, by the convexity of $f$, the left-hand side in the above display
  is $\geq C_k \Expectation [f(x_k)]$.
  Substituting further \cref{\LocalName{eq:ScalingCoefficient}},
  we obtain \cref{\MasterName{eq:PreliminaryGuarantee}}.

  \ProofPart

  Let us prove \cref{\MasterName{eq:FinalGuarantee}}.
  Let $k \geq 1$ and $x \in Q$ be arbitrary.
  Putting together
  \cref{%
    \MasterName{eq:PreliminaryGuarantee},%
    GradientMethodWithRelativelyInexactStochasticOracle::eq:FunctionIsConsistent%
  },
  we get
  \[
    C_k \Expectation [f(x_k)]
    \leq
    \Bigl( \frac{1}{2 \gamma_0} + \sum_{i = 0}^{k - 1} a_i \Bigr) f(x).
  \]
  Hence, according to \cref{\LocalName{eq:ScalingCoefficient}},
  \[
    \frac{C_k}{\frac{1}{2 \gamma_0} + \sum_{i = 0}^{k - 1} a_i}
    =
    \frac{
      \sum_{i = 0}^{k - 1} a_i (1 - \delta - L a_i)
    }{
      \frac{1}{2 \gamma_0} + \sum_{i = 0}^{k - 1} a_i
    }
    =
    1
    -
    \frac{
      1 + 2 \gamma_0 \sum_{i = 0}^{k - 1} a_i (\delta + L a_i)
    }{
      1 + 2 \gamma_0 \sum_{i = 0}^{k - 1} a_i
    }
    =
    1 - \delta_k
  \]
  Combining the above two displays, we get
  $(1 - \delta_k) \Expectation [f(x_k)] \leq f(x)$.
  This proves \cref{\MasterName{eq:FinalGuarantee}}
  in view of the definition of~$f^*$ in
  \cref{GradientMethodWithRelativelyInexactStochasticOracle::eq:Problem}
  and the fact that $x \in Q$ was arbitrary.
\end{proof}

Thus, after $k \geq 1$ iterations,
\cref{\LocalName{alg:Algorithm}} generates a point~$x_k \in Q$
which is, on average, a $\delta_k$-approximate solution to
\cref{\LocalName{eq:Problem}} in relative scale.
Let us show that, by appropriately choosing step sizes~$a_k$
in the method, we can make~$\delta_k$ sufficiently small
(for a sufficiently large~$k$).

First, observe that, for any $k \geq 0$, we have
\begin{equation}
  \LocalLabel{eq:AlternativeFormulaForAccuracy}
  \delta_k
  =
  \delta
  +
  \frac{
    1 - \delta + 2 \gamma_0 L \sum_{i = 0}^{k - 1} a_i^2
  }{
    1 + 2 \gamma_0 \sum_{i = 0}^{k - 1} a_i
  }
  \geq
  \delta.
\end{equation}
Therefore, we cannot hope for
\cref{\LocalName{alg:Algorithm}}
to produce an approximation solution whose relative accuracy will be better
than that of the oracle itself.
This is quite natural.
At the same time, we can easily ensure that
$\delta_k \to \delta$ as $k \to \infty$.
For this, it suffices to choose step sizes~$a_k$ in such a way that
\[
  \sum_{k = 0}^{\infty} a_k = \infty,
  \qquad
  \sum_{k = 0}^{\infty} a_k^2 < \infty,
\]
which is a standard recipe for subgradient methods (see, e.g., Section~3.2.3
in~\cite{Nesterov-18-LecturesConvex}).

Let us now derive an optimal choice of step sizes
for \cref{\LocalName{alg:Algorithm}}.
This is easier when we fix the total number of steps, say, $N \geq 1$.
From \cref{\LocalName{eq:AlternativeFormulaForAccuracy}},
it is not difficult to see that $\delta_N$ is a symmetric convex function
of~$(a_i)_{i = 0}^{N - 1}$.
Hence, its minimum is attained at $a_i = a_N^*$, $0 \leq i \leq N - 1$,
where~$a_N^*$ minimizes the ratio
\begin{equation}
  \LocalLabel{eq:RatioForOptimalStepSize}
  \delta_N(a)
  \DefinedEqual
  \frac{1 + 2 \gamma_0 N a (\delta + L a)}{1 + 2 \gamma_0 N a}
\end{equation}
over all $a \in \OpenOpenInterval{0}{\frac{1 - \delta}{L}}$.
Differentiating~$\delta_N(a)$ in~$a$ and setting the derivative to zero,
we come to the following equation for~$a_N^*$:
\begin{equation}
  \LocalLabel{eq:EquationForOptimalStepSize}
  (\delta + 2 L a_N^*) (1 + 2 \gamma_0 N a_N^*)
  =
  1 + 2 \gamma_0 N a_N^* (\delta + L a_N^*).
\end{equation}
This is the quadratic equation
$2 \gamma_0 N L (a_N^*)^2 + 2 L a_N^* = 1 - \delta$
with a unique positive solution
\begin{equation}
  \LocalLabel{eq:OptimalStepSize}
  a_N^*
  =
  \frac{\sqrt{2 \gamma_0 N L (1 - \delta) + L^2} - L}{2 \gamma_0 N L}
  =
  \frac{1 - \delta}{\sqrt{2 \gamma_0 N L (1 - \delta) + L^2} + L}
  \quad \Bigl( < \frac{1 - \delta}{L} \Bigr).
\end{equation}
Substituting this value into \cref{\LocalName{eq:RatioForOptimalStepSize}}
and taking into account \cref{\LocalName{eq:EquationForOptimalStepSize}},
we obtain
\[
  \delta_N(a_N^*) - \delta
  =
  2 L a_N^*
  =
  \frac{
    2 \sqrt{L} \, (1 - \delta)
  }{
    \sqrt{2 \gamma_0 N (1 - \delta) + L} + \sqrt{L}
  }
  \leq
  \sqrt{\frac{2 L}{\gamma_0 N}}.
\]
Thus, for the optimal choice of the step
size~\eqref{\LocalName{eq:OptimalStepSize}},
we have the $\BigO(1 / \sqrt{N})$ convergence rate to the level~$\delta$.
In particular, for generating a point~$x_N \in Q$ such that
\[
  (1 - 2 \delta) \Expectation [f(x_N)] \leq f^*,
\]
it suffices to make
\begin{equation}
  \LocalLabel{eq:ComplexityForOptimalStepSizes}
  N
  \geq
  N(\delta)
  \DefinedEqual
  \frac{2 L}{\gamma_0 \delta^2}
\end{equation}
iterations of \cref{\LocalName{alg:Algorithm}} with
step sizes~\eqref{\LocalName{eq:OptimalStepSize}}.

Instead of the optimal step sizes \cref{\LocalName{eq:OptimalStepSize}},
we can use another (simpler) choice that leads to the same complexity
guarantee~\cref{\LocalName{eq:ComplexityForOptimalStepSizes}}
but requires only the knowledge of~$\delta$ and~$L$.

\begin{theorem}
  \LocalLabel{th:ConvergenceRateForSimplerChoiceOfStepSizes}
  \UsingNestedNamespace{GradientMethodWithRelativelyInexactStochasticOracle}{ConvergenceRateForSimplerChoiceOfStepSizes}
  Consider \cref{\MasterName{alg:Algorithm}} with constant step
  sizes
  \begin{equation}
    \LocalLabel{eq:StepSizes}
    a_k \DefinedEqual \frac{\delta}{2 L} \quad \Bigl(< \frac{1}{L} \Bigr),
    \qquad
    k \geq 0,
  \end{equation}
  and the initial point $x_0 = \hat{x}_0$
  (where $\hat{x}_0 \in Q$ satisfies
  \cref{GradientMethodWithRelativelyInexactStochasticOracle::eq:FunctionIsConsistent}).
  Then, for any integer
  \begin{equation}
    \LocalLabel{eq:Complexity}
    N \geq N(\delta) \DefinedEqual \frac{2 L}{\gamma_0 \delta^2},
  \end{equation}
  we have
  \[
    (1 - 2 \delta) \Expectation [f(x_N)] \leq f^*.
  \]
\end{theorem}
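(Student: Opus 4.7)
The plan is to directly invoke \cref{StochasticGradientMethod::ConvergenceRate::eq:FinalGuarantee} from \cref{StochasticGradientMethod::th:ConvergenceRate}, which already gives $(1 - \delta_N) \Expectation [f(x_N)] \leq f^*$ provided that $x_0 = \hat{x}_0$ (an assumption we are granted here). The entire task therefore reduces to a purely algebraic verification: show that with the constant choice $a_k = a \DefinedEqual \delta/(2L)$ for $k = 0, \ldots, N - 1$, the quantity $\delta_N$ is bounded above by $\delta$ whenever $N \geq 2L/(\gamma_0 \delta^2)$. The monotonicity step $(1 - \delta_N) \Expectation [f(x_N)] \leq f^*$ then immediately implies $(1 - \delta) \Expectation [f(x_N)] \leq f^*$ since $f^* \geq 0$ (a consequence of \cref{StochasticGradientMethod::eq:FunctionIsConsistent} evaluated at any feasible point).

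First I would confirm the admissibility of the step size: $a = \delta/(2L) < 1/L$ because $\delta < 1$, so the assumption $a_k \in (0, 1/L)$ of \cref{StochasticGradientMethod::Algorithm::step:ChooseCoefficient} is satisfied. Next I would substitute the constant coefficients into the definition of $\delta_N$, obtaining
\[
  \sum_{i = 0}^{N - 1} a_i = N a = \frac{N \delta}{2 L},
  \qquad
  \sum_{i = 0}^{N - 1} a_i^2 = N a^2 = \frac{N \delta^2}{4 L^2},
\]
which plugged into \cref{StochasticGradientMethod::ConvergenceRate::eq:FinalGuarantee} yields
\[
  \delta_N
  =
  \frac{1 + \gamma_0 N \delta^2 / (2 L)}{1 + \gamma_0 N \delta / L}.
\]

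The remaining step is to check that $\delta_N \leq \delta$. Cross-multiplying (the denominator is positive), this is equivalent to $1 + \gamma_0 N \delta^2/(2L) \leq \delta + \gamma_0 N \delta^2/L$, which simplifies to $1 - \delta \leq \gamma_0 N \delta^2/(2L)$, i.e., $N \geq 2 L (1 - \delta)/(\gamma_0 \delta^2)$. Since $1 - \delta < 1$, the hypothesis $N \geq N(\delta) = 2L/(\gamma_0 \delta^2)$ is amply sufficient, and the conclusion follows.

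There is no real obstacle here; the substance of the argument lies in \cref{StochasticGradientMethod::th:ConvergenceRate}, and this theorem is essentially a clean specialization. The only mildly non-trivial point is the algebraic check that the simple constant choice $\delta/(2L)$ matches the complexity $2L/(\gamma_0\delta^2)$ derived from the optimal variable step size in \cref{StochasticGradientMethod::eq:ComplexityForOptimalCoefficients}, confirming that nothing is lost (up to the constant) by avoiding the more intricate optimal rule.
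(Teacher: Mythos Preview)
Your proposal is correct and follows essentially the same route as the paper: invoke \cref{StochasticGradientMethod::th:ConvergenceRate}, substitute the constant step size into the expression for $\delta_N$, and verify algebraically that $\delta_N \leq \delta$ once $N \geq N(\delta)$. The paper bounds $\delta_N$ by splitting the fraction as $\frac{L}{\gamma_0 N \delta} + \frac{\delta}{2}$, whereas you cross-multiply; your computation in fact yields the slightly sharper threshold $N \geq 2L(1-\delta)/(\gamma_0\delta^2)$, which is exactly the improvement noted in the Remark following the theorem. One tiny slip: the passage from $(1-\delta_N)\Expectation[f(x_N)] \leq f^*$ to $(1-\delta)\Expectation[f(x_N)] \leq f^*$ requires $\Expectation[f(x_N)] \geq 0$, not $f^* \geq 0$; both follow from \cref{StochasticGradientMethod::eq:FunctionIsConsistent}, so the argument is unaffected.
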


\begin{proof}
  \UsingNestedNamespace{GradientMethodWithRelativelyInexactStochasticOracle::ConvergenceRateForSimplerChoiceOfStepSizes}{Proof}

  Substituting \cref{\MasterName{eq:StepSizes}} into
  \cref{GradientMethodWithRelativelyInexactStochasticOracle::eq:AlternativeFormulaForAccuracy},
  we obtain, for any $N \geq N(\delta)$,
  \begin{align*}
    \delta_N
    -
    \delta
    &=
    \frac{
      1 - \delta + 2 \gamma_0 L N \frac{\delta^2}{4 L^2}
    }{
      1 + 2 \gamma_0 N \frac{\delta}{2 L}
    }
    =
    \frac{2 L (1 - \delta) + \gamma_0 N \delta^2}{2 L + 2 \gamma_0 N \delta}
    \\
    &\leq
    \frac{L}{\gamma_0 N \delta} + \frac{\delta}{2}
    \leq
    \frac{L}{\gamma_0 N(\delta) \delta} + \frac{\delta}{2}
    =
    \delta,
  \end{align*}
  where the final identity follows from~\cref{\MasterName{eq:Complexity}}.
  It remains to apply \cref{GradientMethodWithRelativelyInexactStochasticOracle::th:ConvergenceRate}.
\end{proof}
\subsection{Dual Averaging Method}
\label{sec:DualAveragingMethod}

The Gradient Method from
\cref{sec:GradientMethodWithRelativelyInexactStochasticOracle}
has a couple of significant drawbacks.
First, it uses the same step size at every iteration, which is proportional to
the desired accuracy level
(formula~\eqref{GradientMethodWithRelativelyInexactStochasticOracle::ConvergenceRateForSimplerChoiceOfStepSizes::eq:StepSizes}).
This requires the user to know in advance the final accuracy they want to
obtain, and means that the method is essentially a \emph{short-step} one---its
step size is always small no matter what.
Instead, it would be more natural to start with a large step size at the initial
iterations, and then gradually decrease it.

Second, the algorithm works with an oracle whose accuracy is assumed to be
fixed.
However, in many cases, this quantity is actually a parameter of the oracle,
and one can query the oracle at the same point with various accuracies
(see \cref{sec:ApplicationsInSemidefiniteOptimization} for examples).
Since the complexity of an oracle is usually monotone in the required
accuracy, it makes sense to start with a large inaccuracy level and then
gradually decrease it in iterations, similarly to the above discussion on the
step sizes.

In this section, we present an algorithm that addresses the above drawbacks
and still enjoys the same worst-case complexity guarantee as the method
from \cref{sec:GradientMethodWithRelativelyInexactStochasticOracle}.
Our algorithm is based on the Dual Averaging method
from~\cite{Nesterov-07-PrimalDual}.

Our setup is almost the same as in
\cref{sec:GradientMethodWithRelativelyInexactStochasticOracle}.
Specifically, we are interested in solving the
problem~\eqref{GradientMethodWithRelativelyInexactStochasticOracle::eq:Problem}
under the
assumptions~\eqref{GradientMethodWithRelativelyInexactStochasticOracle::eq:FunctionIsConsistent},
\eqref{GradientMethodWithRelativelyInexactStochasticOracle::eq:ApproximateSubgradient}
and~\eqref{GradientMethodWithRelativelyInexactStochasticOracle::eq:StochasticSubgradientIsConsistent}.
The only difference is that now we assume that $\delta$ is also an input of the
oracle~$g$, so we should write $g(\delta, x, \xi)$ instead of $g(x, \xi)$.
(For simplicity, we assume that the constant~$L$ in
\cref{GradientMethodWithRelativelyInexactStochasticOracle::eq:StochasticSubgradientIsConsistent}
is independent of~$\delta$.)

We consider the following method:

\begin{SimpleAlgorithm}[
  title = {Dual Averaging with Relatively Inexact Stochastic Oracle},
  label = alg:DualAveraging,
  width = 0.65\linewidth
]
  \begin{AlgorithmGroup}[Input]
    Oracle~$g$, initial point~$x_0 \in Q$, constant~$L$,
    deterministic positive sequences~$(a_k)_{k = 1}^\infty$, $(\beta_k)_{k = 0}^\infty$
    and~$(\delta_k)_{k = 1}^\infty$.
  \end{AlgorithmGroup}
  \AlgorithmGroupSeparator
  \begin{AlgorithmGroup}[Preconditions]
    $(\beta_k)_{k = 0}^\infty$ is non-decreasing;
    $\frac{L a_k}{\beta_k} < 1 - \delta_k$ for all $k \geq 1$.
  \end{AlgorithmGroup}
  \AlgorithmGroupSeparator

  \begin{AlgorithmGroup}
    \begin{AlgorithmSteps}
      \AlgorithmStep
        \label{step:Initialization}
        $v_0 \DefinedEqual x_0$,
        $\bar{g}_0 \DefinedEqual 0$ ($\in \VectorSpace{E}\Dual$),
        $A_0 \DefinedEqual C_0 \DefinedEqual 0$ ($\in \RealField$).
      \AlgorithmStep
        Iterate for $k \geq 0$:
        \begin{AlgorithmSteps}
          \AlgorithmStep
            \label{step:ComputeShiftedProxCenter}
            $w_{k + 1} \DefinedEqual \bigl( \beta_k v_k + (\beta_{k + 1} - \beta_k) x_0 \bigr) / \beta_{k + 1}$.
          \AlgorithmStep
            $g_{k + 1} \DefinedEqual g(\delta_{k + 1}, w_{k + 1}, \xi_{k + 1})$
            for an i.i.d.\ sampled~$\xi_{k + 1}$.
          \AlgorithmStep
            \label{step:UpdateAverageGradient}
            $A_{k + 1} \DefinedEqual A_k + a_{k + 1}$,
            $\bar{g}_{k + 1} \DefinedEqual (A_k \bar{g}_k + a_{k + 1} g_{k + 1}) / A_{k + 1}$.
          \AlgorithmStep
            \label{step:UpdateProxCenter}
            $v_{k + 1} \DefinedEqual T_Q\bigl( x_0, \frac{A_{k + 1}}{\beta_{k + 1}} \bar{g}_{k + 1} \bigr)$.
          \AlgorithmStep
            \label{step:UpdateOutputPoint}
            $c_{k + 1} \DefinedEqual a_{k + 1} \bigl( 1 - \delta_{k + 1} - \frac{L a_{k + 1}}{\beta_{k + 1}} \bigr)$,
            $C_{k + 1} \DefinedEqual C_k + c_{k + 1}$,
            $x_{k + 1} \DefinedEqual (C_k x_k + c_{k + 1} w_{k + 1}) / C_{k + 1}$.
        \end{AlgorithmSteps}
    \end{AlgorithmSteps}
  \end{AlgorithmGroup}
\end{SimpleAlgorithm}

\begin{theorem}
  \label{th:DualAveragingConvergenceRate}
  Consider \cref{alg:DualAveraging} with $x_0 = \hat{x}_0$
  (see \cref{GradientMethodWithRelativelyInexactStochasticOracle::eq:FunctionIsConsistent}).
  Then, for any $k \geq 1$,
  \begin{equation}
    \label{eq:ConvergenceRate}
    (1 - \Delta_k) \Expectation f(x_k) \leq f^*,
    \qquad
    \text{where} \quad
    \Delta_k
    \DefinedEqual
    \frac{
      \beta_k + 2 \gamma_0 \sum_{i = 1}^k a_i (\delta_i + \frac{L a_i}{\beta_i})
    }{
      \beta_k + 2 \gamma_0 A_k
    }
    \quad (< 1).
  \end{equation}
\end{theorem}

\begin{proof}
  Let us define, for any $k \geq 0$, the following ``estimating function''
  $\Map{\psi_k}{\VectorSpace{E}}{\RealField}$:
  \begin{equation}
    \label{eq:EstimatingFunction}
    \psi_k(x)
    \DefinedEqual
    \frac{\beta_k}{2} \RelativeNorm{x - x_0}{B}^2
    +
    \sum_{i = 1}^k a_i \ell_i(x),
  \end{equation}
  where, for any $k \geq 1$,
  \[
    \ell_k(x) \DefinedEqual (1 - \delta_k) f(w_k) + \InnerProduct{g_k}{x - w_k}.
  \]
  Note that, according to
  \cref{GradientMethodWithRelativelyInexactStochasticOracle::eq:ApproximateSubgradient},
  in expectation, the function~$\ell_k$ is a lower bound on~$f$ over~$Q$:
  \[
    \Expectation_{\xi_k} \ell_k(x)
    =
    (1 - \delta_k) f(w_k) + \InnerProduct{\Expectation_{\xi_k} g_k}{x - w_k}
    \leq
    f(x),
    \qquad
    \forall x \in Q.
  \]
  Therefore, for any $k \geq 1$ and any $x \in Q$, we have
  \begin{equation}
    \label{eq:UpperBoundOnEstimatingFunction}
    \Expectation \psi_k(x)
    \leq
    \frac{\beta_k}{2} \RelativeNorm{x - x_0}{B}^2
    +
    A_k f(x).
  \end{equation}

  Let us show that, for any $k \geq 0$, we have
  \[
    v_k
    =
    \argmin_{x \in Q} \psi_k(x).
  \]
  This is obvious for $k = 0$ since, by our definition, $v_0 = x_0 \in Q$.
  Let $k \geq 1$.
  From the definitions at
  \cref{step:Initialization,step:UpdateAverageGradient},
  it follows that
  \[
    \bar{g}_k = \frac{1}{A_k} \sum_{i = 1}^k a_i g_i,
    \qquad
    A_k = \sum_{i = 1}^k a_i,
  \]
  Hence, according to its definition at \cref{step:UpdateProxCenter},
  for any $k \geq 1$, we have
  \[
    v_k
    =
    T_Q\Bigl(x_0, \frac{A_k}{\beta_k} \bar{g}_k \Bigr)
    =
    T_Q\Bigl(x_0, \frac{1}{\beta_k} \sum_{i = 1}^k a_i g_i \Bigr)
    =
    \argmin_{x \in Q} \psi_k(x),
  \]
  where the final identity is due to
  \cref{GradientMethodWithRelativelyInexactStochasticOracle::eq:GradientStep}.

  Since $v_k$ is the minimizer of~$\psi_k$ over~$Q$ and $\psi_k$ is a
  $1$-strongly convex function, we have, for any $k \geq 0$ and any $x \in Q$,
  \[
    \psi_k(x) \geq \psi_k^* + \frac{\beta_k}{2} \RelativeNorm{x - v_k}{B}^2,
  \]
  where $\psi_k^* \DefinedEqual \psi_k(v_k)$ be the minimal value of~$\psi_k$
  on~$Q$.

  Let $k \geq 0$ be an arbitrary index.
  According to \cref{eq:EstimatingFunction} and the above display,
  for any $x \in Q$, we have
  \begin{align*}
    \psi_{k + 1}(x)
    &=
    \psi_k(x)
    +
    \frac{\beta_{k + 1} - \beta_k}{2} \RelativeNorm{x - x_0}{B}^2
    +
    a_{k + 1} \ell_{k + 1}(x)
    \\
    &\geq
    \psi_k^*
    +
    \frac{\beta_k}{2} \RelativeNorm{x - v_k}{B}^2
    +
    \frac{\beta_{k + 1} - \beta_k}{2} \RelativeNorm{x - x_0}{B}^2
    +
    a_{k + 1} \ell_{k + 1}(x)
    \\
    &\geq
    \psi_k^*
    +
    \frac{\beta_{k + 1}}{2} \RelativeNorm{x - w_{k + 1}}{B}^2
    +
    a_{k + 1} \ell_{k + 1}(x),
  \end{align*}
  where the final inequality follows from the convexity of the squared
  (semi)norm and the definition of~$w_{k + 1}$ at
  \cref{step:ComputeShiftedProxCenter}
  (note that, according to our requirements, $\beta_k \leq \beta_{k + 1}$).
  Substituting now $x = v_{k + 1}$ together with the definition
  of~$\ell_{k + 1}(x)$, we get
  \begin{align*}
    \psi_{k + 1}^* - \psi_k^*
    &\geq
    \frac{\beta_{k + 1}}{2} \RelativeNorm{v_{k + 1} - w_{k + 1}}{B}^2
    +
    a_{k + 1} \bigl[
      (1 - \delta_{k + 1}) f(w_{k + 1})
      +
      \InnerProduct{g_{k + 1}}{v_{k + 1} - w_{k + 1}}
    \bigr]
    \\
    &\geq
    a_{k + 1} \Bigl[
      (1 - \delta_{k + 1}) f(w_{k + 1})
      -
      \frac{a_{k + 1}}{2 \beta_{k + 1}} \RelativeDualNorm{g_{k + 1}}{B}^2
    \Bigr].
  \end{align*}
  Taking the expectation w.r.t.~$\xi_{k + 1}$, using the fact that
  $
    \Expectation_{\xi_{k + 1}}[\RelativeDualNorm{g_{k + 1}}{B}^2]
    \leq
    2 L f(w_{k + 1})
  $
  (see \cref{GradientMethodWithRelativelyInexactStochasticOracle::eq:StochasticSubgradientIsConsistent}),
  and the definition of~$c_{k + 1}$ at \cref{step:UpdateOutputPoint},
  we obtain
  \[
    \Expectation_{\xi_{k + 1}} \psi_{k + 1}^* - \psi_k^*
    \geq
    a_{k + 1}
    \Bigl( 1 - \delta_{k + 1} - \frac{L a_{k + 1}}{\beta_{k + 1}} \Bigr)
    f(w_{k + 1})
    =
    c_{k + 1} f(w_{k + 1}).
  \]

  Passing to full expectations in the above inequalities, summing up
  and using the fact that $\psi_0^* = 0$, we get, for any $k \geq 1$,
  \begin{equation}
    \label{eq:LowerBoundForMinimalValueOfEstimatingFunction}
    \Expectation \psi_k^*
    \geq
    \sum_{i = 1}^k c_i \Expectation f(w_i)
    \geq
    C_k \Expectation f(x_k),
  \end{equation}
  where the final inequality follows from the convexity of~$f$
  and the fact that
  \[
    x_k = \frac{1}{C_k} \sum_{i = 1}^k c_i w_i,
    \qquad
    C_k = \sum_{i = 1}^k c_i
  \]
  (see the definitions at \cref{step:Initialization,step:UpdateOutputPoint}).

  Combining \cref{eq:LowerBoundForMinimalValueOfEstimatingFunction} with
  \cref{%
    eq:UpperBoundOnEstimatingFunction,%
    GradientMethodWithRelativelyInexactStochasticOracle::eq:FunctionIsConsistent%
  },
  we finally obtain, for any $x \in Q$,
  \[
    C_k \Expectation f(x_k)
    \leq
    \Expectation \psi_k^*
    \leq
    \Expectation \psi_k(x)
    \leq
    \frac{\beta_k}{2} \RelativeNorm{x - x_0}{B}^2 + A_k f(x)
    \leq
    \Bigl( \frac{\beta_k}{2 \gamma_0} + A_k \Bigr) f(x).
  \]
  This means that
  \[
    (1 - \Delta_k) \Expectation f(x_k) \leq f^*,
  \]
  where
  \begin{align*}
    \Delta_k
    &\DefinedEqual
    1 - \frac{C_k}{\frac{\beta_k}{2 \gamma_0} + A_k}
    =
    1
    -
    \frac{
      \sum_{i = 1}^k a_i (1 - \delta_i - \frac{L a_i}{\beta_i})
    }{
      \frac{\beta_k}{2 \gamma_0} + A_k
    }
    \\
    &=
    \frac{
      \frac{\beta_k}{2 \gamma_0}
      +
      \sum_{i = 1}^k a_i (\delta_i + \frac{L a_i}{\beta_i})
    }{
      \frac{\beta_k}{2 \gamma_0} + A_k
    }
    =
    \frac{
      \beta_k + 2 \gamma_0 \sum_{i = 1}^k a_i (\delta_i + \frac{L a_i}{\beta_i})
    }{
      \beta_k + 2 \gamma_0 A_k
    }.
  \end{align*}
  Note that $\Delta_k < 1$ in view of our assumption that
  $\frac{L a_k}{\beta_k} \leq 1 - \delta_k$ for all $i \geq 1$.
\end{proof}

Comparing the convergence rate estimate~\eqref{eq:ConvergenceRate}
with the corresponding
estimate~\eqref{GradientMethodWithRelativelyInexactStochasticOracle::ConvergenceRate::eq:FinalGuarantee}
for \cref{GradientMethodWithRelativelyInexactStochasticOracle::alg:Algorithm},
we see that they are very similar.
However, the former is more flexible.
First, it contains an additional control sequence~$\beta_k$ which can be chosen
by the user.
Second, the oracle inaccuracy~$\delta$ is allowed to vary at each iteration.
Note that, by choosing $\beta_k \equiv 1$ and $\delta_k \equiv \delta$
in \cref{alg:DualAveraging}, we obtain exactly the same estimate as for
\cref{GradientMethodWithRelativelyInexactStochasticOracle::alg:Algorithm}.

The particular form of the convergence rate estimate~\eqref{eq:ConvergenceRate}
suggests the following choice of oracle inaccuracies:
\begin{equation}
  \label{eq:ChoiceOfOracleInaccuracy}
  \delta_k \DefinedEqual \frac{L a_k}{\beta_k},
  \qquad
  k \geq 1.
\end{equation}
This choice is natural since then both terms~$\delta_i$
and~$\frac{L a_i}{\beta_i}$ in \cref{eq:ConvergenceRate} are well-balanced,
while the corresponding expression for~$\Delta_k$ is exactly the same,
up to an absolute constant, as if we had $\delta_i \equiv 0$:
\[
  \Delta_k
  =
  \frac{
    \beta_k + 4 \gamma_0 L \sum_{i = 1}^k \frac{a_i^2}{\beta_i}
  }{
    \beta_k + 2 \gamma_0 A_k
  }.
\]

Note that the above expression is very similar to that which arises
in the usual convergence analysis of the standard Dual Averaging method
(see~\cite{Nesterov-07-PrimalDual}).
In particular, dropping $\beta_k$ from the denominator in the above display,
we obtain
\[
  \Delta_k
  \leq
  \frac{\beta_k}{2 \gamma_0 A_k}
  +
  \frac{2 L}{A_k} \sum_{i = 1}^k \frac{a_i^2}{\beta_i}.
\]
Thus, we can try to use the same strategies for choosing the coefficients~$a_k$
and~$\beta_k$ as in the standard Dual Averaging method
from~\cite{Nesterov-07-PrimalDual}.
The simplest one is to choose
\begin{equation}
  \label{eq:ChoiceOfScalingCoefficients}
  a_k = 1, \qquad k \geq 1,
\end{equation}
and $\beta_k \sim \sqrt{\gamma_0 L k}$.
However, we still need to respect the constraint that
$\frac{L a_k}{\beta_k} < 1 - \delta_k$ for all $k \geq 1$
(see the preconditions in \cref{alg:DualAveraging}).
For our choice of~$a_k$ and $\delta_k$, this means $\beta_k > 2 L$
for all $k \geq 1$.
It is therefore reasonable to choose
\begin{equation}
  \label{eq:ChoiceOfProxCoefficients}
  \beta_k = \sqrt{8 \gamma_0 L k} + 2 L, \qquad k \geq 0.
\end{equation}
(The particular value of the absolute constant under the square root gives a
``resonably small'' abolute constant in the final iteration complexity bound below.)

\begin{theorem}
  \label{th:ComplexityEstimate}
  Consider \cref{alg:DualAveraging} with coefficients given by
  \cref{%
    eq:ChoiceOfOracleInaccuracy,%
    eq:ChoiceOfScalingCoefficients,%
    eq:ChoiceOfProxCoefficients%
  }.
  Then, for any $\delta \in \OpenOpenInterval{0}{1}$, we have
  \[
    (1 - \delta) \Expectation f(x_k) \leq f^*
  \]
  whenever $k \geq N(\delta)$, where
  \[
    N(\delta) \DefinedEqual \frac{10 L}{\gamma_0 \delta^2}.
  \]
\end{theorem}

\begin{proof}
  According to \cref{eq:ChoiceOfProxCoefficients}, $(\beta_k)_{k = 0}^\infty$ is
  an increasing sequence with $\beta_k > 2 L$ for all $k \geq 1$.
  Consequently, for any $k \geq 1$, we have
  $\frac{L a_k}{\beta_k} + \delta_k = \frac{2 L}{\beta_k} < 1$,
  so the preconditions of \cref{alg:DualAveraging} are satisfied.

  Let $\Delta_k$ be defined as in \cref{th:DualAveragingConvergenceRate}.
  We need to show that $\Delta_k \leq \delta$ for any $k \geq N(\delta)$.

  Let $k \geq 1$ be arbitrary.
  According to
  \cref{eq:ChoiceOfOracleInaccuracy,eq:ChoiceOfScalingCoefficients},
  we have $A_k = k$, and
  \[
    \Delta_k
    =
    \frac{
      \beta_k + 2 \gamma_0 \sum_{i = 1}^k a_i (\delta_i + \frac{L a_i}{\beta_i})
    }{
      \beta_k + 2 \gamma_0 A_k
    }
    =
    \frac{
      \beta_k + 4 \gamma_0 L \sum_{i = 1}^k \frac{1}{\beta_i}
    }{
      \beta_k + 2 \gamma_0 k
    }
    \leq
    \frac{\beta_k + 4 \gamma_0 L \sum_{i = 1}^k \frac{1}{\beta_i}}{2 \gamma_0 k}.
  \]
  Note that
  \[
    \sum_{i = 1}^k \frac{1}{\beta_i}
    =
    \sum_{i = 1}^k \frac{1}{\sqrt{8 \gamma_0 L i} + 2 L}
    \leq
    \frac{1}{\sqrt{8 \gamma_0 L}} \sum_{i = 1}^k \frac{1}{\sqrt{i}}
    \leq
    \sqrt{\frac{k}{2 \gamma_0 L}},
  \]
  where the final inequality follows from the fact that
  $
    \sum_{i = 1}^k \frac{1}{\sqrt{i}}
    \leq
    \int_0^k \frac{d t}{\sqrt{t}}
    =
    2 \sqrt{k}
  $.
  Thus,
  \[
    \Delta_k
    \leq
    \frac{
      \sqrt{8 \gamma_0 L k} + 2 L + 4 \gamma_0 L \sqrt{\frac{k}{2 \gamma_0 L}}
    }{
      2 \gamma_0 k
    }
    =
    \frac{2 \sqrt{8 \gamma_0 L k} + 2 L}{2 \gamma_0 k}
    =
    \sqrt{\frac{8 L}{\gamma_0 k}} + \frac{L}{\gamma_0 k}.
  \]

  Let $\delta \in \OpenOpenInterval{0}{1}$ be arbitrary.
  Denote $\tau_k \DefinedEqual \sqrt{\frac{L}{\gamma_0 k}}$.
  According to the above estimate, for any $k \geq 1$, we have
  $\Delta_k \leq 2 \sqrt{2} \tau_k + \tau_k^2$.
  To ensure that $\Delta_k \leq \delta$, it therefore suffices to ensure that
  $\tau_k \leq \tau$, where $\tau$ is the positive root of the following
  equation:
  \[
    2 \sqrt{2} \tau + \tau^2 = \delta.
  \]
  Solving this quadratic equation, we obtain
  \[
    \tau
    =
    \sqrt{2 + \delta} - \sqrt{2}
    =
    \frac{\delta}{\sqrt{2 + \delta} + \sqrt{2}}
    \geq
    \frac{\delta}{\sqrt{3} + \sqrt{2}}.
  \]
  Thus, it suffices to ensure that
  $\tau_k \leq \frac{\delta}{\sqrt{3} + \sqrt{2}}$,
  or, equivalently, that
  \[
    k
    \geq
    (\sqrt{3} + \sqrt{2})^2 \frac{L}{\gamma_0 \delta^2}
    =
    (5 + 2 \sqrt{6}) \frac{L}{\gamma_0 \delta^2}.
  \]
  It remains to note that $5 + 2 \sqrt{6} \leq 10$.
\end{proof}

Thus, \cref{alg:DualAveraging} with coefficients given by
\cref{%
  eq:ChoiceOfOracleInaccuracy,%
  eq:ChoiceOfScalingCoefficients,%
  eq:ChoiceOfProxCoefficients%
}
has the same worst-case iteration complexity (up to an absolute constant) as
\cref{GradientMethodWithRelativelyInexactStochasticOracle::alg:Algorithm}
with the fixed oracle inaccuracy~$\delta$.
Note, however, that, in our new method, the oracle inaccuracy~$\delta_k$
decreases at the following rate:
\[
  \delta_k
  =
  \frac{L}{\sqrt{8 \gamma_0 L k} + 2 L}
  =
  \frac{1}{\sqrt{\frac{8 \gamma_0 k}{L}} + 2}
  \sim
  \sqrt{\frac{L}{\gamma_0 k}}.
\]
In particular, for all $k \leq N(\delta) \sim \frac{L}{\gamma_0 \delta^2}$,
we have $\delta_k \gtrsim \delta$ which means that the new algorithm
never needs the oracle inaccuracy less than~$\delta$ (up to an absolute
constant).
For any reasonable oracle (whose complexity is polynomial in~$\delta$),
it means that the worst-case total complexity of the Dual Averaging method is
the same (in order) as that of the previously considered Gradient Method
with fixed step sizes.
If, however, the actual iteration complexity~$k(\delta)$ of the new
algorithm turns out to be significantly smaller than the worst-case~$N(\delta)$
on some particular problem, then the total oracle complexity of the new
algorithm may be significantly smaller as well.
\subsection{Composition with Affine Mapping}
\label{sec:CompositionWithAffineMapping}
\UsingNamespace{CompositionWithAffineMapping}

Let us show that our main assumptions from
\cref{sec:GradientMethodWithRelativelyInexactStochasticOracle}
are preserved when composing a function with an affine mapping.

Consider the problem
\begin{equation}
  \LocalLabel{eq:Problem}
  \min_{x \in Q} [f(x) \DefinedEqual F(A x + b)],
\end{equation}
where $Q \subseteq \VectorSpace{E}$ is a nonempty convex set,
$\Map{A}{\VectorSpace{E}}{\VectorSpace{E}_1}$ is a linear operator,
$b \in \VectorSpace{E}_1$,
and $\Map{F}{\VectorSpace{E}_1}{\RealField}$ is a convex function
satisfying assumptions from
\cref{sec:GradientMethodWithRelativelyInexactStochasticOracle}
on the set
\begin{equation}
  \LocalLabel{eq:SetImage}
  Q_1 \DefinedEqual A(Q) + b \quad (\subseteq \VectorSpace{E}_1).
\end{equation}
Specifically, we assume the space~$\VectorSpace{E}_1$ is equipped with
a certain Euclidean seminorm~$\RelativeNorm{\cdot}{B_1}$, where
$\Map{B_1}{\VectorSpace{E}_1}{\VectorSpace{E}_1\Dual}$
is a self-adjoint positive semidefinite linear operator,
and that the function~$F$ is consistent with this seminorm:
\begin{equation}
  \LocalLabel{eq:OuterFunctionIsConsistent}
  F(y) \geq \gamma_0 \RelativeNorm{y - \hat{y}_0}{B_1}^2,
  \qquad
  \forall y \in Q_1,
\end{equation}
where $\hat{y}_0 \in \VectorSpace{E}_1$ and $\gamma_0 > 0$.
We assume the following regularity condition is satisfied.
\begin{assumption}
  \LocalLabel{as:ClosednessOfSetImage}
  The set $Q_1 + \ker B_1$ is closed.
\end{assumption}

\Cref{\LocalName{as:ClosednessOfSetImage}} is satisfied, in particular,
when $Q$ is an affine subspace
(e.g., when problem~\eqref{\LocalName{eq:Problem}} is unconstrained,
i.e., $Q = \VectorSpace{E}$).

We also assume that the function~$F$ is represented by a relatively inexact
stochastic gradient oracle~$(G, \xi)$ with relatively bounded magnitude:
for some $\delta \in \ClosedOpenInterval{0}{1}$ and $L > 0$,
\begin{gather}
  \LocalLabel{eq:StochasticSubgradientForOuterFunctionIsRelativelyApproximate}
  F(y)
  \geq
  (1 - \delta) F(y_1) + \DualPairing{\Expectation_{\xi}[G(y_1, \xi)]}{y - y_1},
  \qquad
  \forall y, y_1 \in \VectorSpace{E}_1,
  \\
  \LocalLabel{eq:StochasticSubgradientForOuterFunctionIsRelativelyBounded}
  \Expectation_{\xi}[\RelativeDualNorm{G(y, \xi)}{B_1}^2]
  \leq
  2 L F(y),
  \qquad
  \forall y \in \VectorSpace{E}_1.
\end{gather}

A natural stochastic oracle for the function~$f$ is, of course,
$(g, \xi)$ defined by
\begin{equation}
  \LocalLabel{eq:StochasticSubgradient}
  g(x, \xi) \DefinedEqual A\Adjoint G(A x + b, \xi),
  \qquad
  x \in \VectorSpace{E}.
\end{equation}
It is not difficult to see that this oracle is $\delta$-relatively approximate
for~$f$: for any $x, y \in \VectorSpace{E}$, we have, according to our
assumption~\eqref{\LocalName{eq:StochasticSubgradientForOuterFunctionIsRelativelyApproximate}},
\begin{align*}
  f(y)
  &=
  F(A y + b)
  \geq
  (1 - \delta) F(A x + b)
  +
  \DualPairing{\Expectation_{\xi}[G(A x + b, \xi)]}{A (y - x)}
  \\
  &=
  (1 - \delta) f(x) + \DualPairing{\Expectation_{\xi}[g(x, \xi)]}{y - x}.
\end{align*}

Now let us introduce a Euclidean seminorm in the space~$\VectorSpace{E}$.
A good choice is
\begin{equation}
  \LocalLabel{eq:Seminorm}
  \RelativeNorm{x}{B}
  \DefinedEqual
  \RelativeNorm{A x}{B_1},
  \qquad
  x \in \VectorSpace{E},
\end{equation}
which is the seminorm induced by the operator
\begin{equation}
  \LocalLabel{eq:OperatorInducingSeminorm}
  B \DefinedEqual A\Adjoint B_1 A.
\end{equation}
This choice is good for several reasons.
First, it correctly ``translates'' our closedness
assumption~\ref{\LocalName{as:ClosednessOfSetImage}}
from the space~$\VectorSpace{E}_1$ into~$\VectorSpace{E}$.

\begin{lemma}
  Under \cref{\LocalName{as:ClosednessOfSetImage}},
  the set $Q + \ker B$ is closed.
\end{lemma}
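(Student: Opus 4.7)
The plan is to reduce the closedness of $Q + \ker B$ in $\VectorSpace{E}$ to the assumed closedness of $Q_1 + \ker B_1$ in $\VectorSpace{E}_1$ via the continuous affine map $T(x) \DefinedEqual A x + b$. I expect the whole argument to boil down to a set-theoretic identity plus the observation that preimages of closed sets under continuous maps are closed.

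First, I would compute the kernel of $B = A\Adjoint B_1 A$. For any $x \in \VectorSpace{E}$,
\[
  x \in \ker B
  \iff
  \DualPairing{A\Adjoint B_1 A x}{x} = 0
  \iff
  \RelativeNorm{A x}{B_1}^2 = 0
  \iff
  A x \in \ker B_1,
\]
so $\ker B = A^{-1}(\ker B_1)$. This is the key algebraic fact relating the two kernels.

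Next, I would establish the identity
\[
  Q + \ker B = T^{-1}(Q_1 + \ker B_1).
\]
The forward inclusion is immediate: if $x = y + h$ with $y \in Q$ and $h \in \ker B$, then $T(x) = (A y + b) + A h$, and $A h \in \ker B_1$ by the previous step, so $T(x) \in Q_1 + \ker B_1$. For the reverse inclusion, if $T(x) = A y + b + h_1$ for some $y \in Q$ and $h_1 \in \ker B_1$, then $A(x - y) = h_1 \in \ker B_1$, so $x - y \in \ker B$ and thus $x \in Q + \ker B$.

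Finally, since $T$ is continuous and, by \cref{\LocalName{as:ClosednessOfSetImage}}, the set $Q_1 + \ker B_1$ is closed in $\VectorSpace{E}_1$, its preimage $T^{-1}(Q_1 + \ker B_1) = Q + \ker B$ is closed in $\VectorSpace{E}$, which concludes the proof. I do not anticipate any genuine obstacle here; the only subtlety worth double-checking is the ``$\supseteq$'' direction of the set identity, to make sure we really exploit that $\ker B$ is the \emph{full} preimage $A^{-1}(\ker B_1)$ and not just a proper subspace of it.
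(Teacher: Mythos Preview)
Your proof is correct and follows essentially the same idea as the paper's: both identify $\ker B = A^{-1}(\ker B_1)$ (equivalently $\ker(B_1 A)$) and then use the continuity of the affine map $x \mapsto A x + b$ to pull closedness back from $\VectorSpace{E}_1$ to $\VectorSpace{E}$. The paper packages the last step as a separate sequence-based auxiliary lemma, whereas your presentation via the set identity $Q + \ker B = T^{-1}(Q_1 + \ker B_1)$ together with the ``preimage of a closed set under a continuous map is closed'' principle is a slightly more direct one-shot argument.
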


\begin{proof}
  Note from \cref{\LocalName{eq:OperatorInducingSeminorm}} that
  $\ker B = \ker(B_1 A)$
  ($B_1$ is positive semidefinite).
  Combining this with \cref{\LocalName{eq:SetImage}}
  and the fact that closedness is a translation-invariant
  property, we see that we need to prove the following implication:
  \[
    A(Q) + \ker B_1 \ \text{is closed}
    \implies
    Q + \ker(B_1 A) \ \text{is closed}.
  \]
  But this follows from \cref{th:AuxiliaryClosednessResult}.
\end{proof}

Second, our choice of the seminorm preserves the consistency
constants~$\gamma_0$ and~$L$.
Indeed, let us define~$\hat{x}_0$ in the following way:
\begin{equation}
  \LocalLabel{eq:ReferencePointInConsistencyConditionForFunction}
  \hat{x}_0
  \DefinedEqual
  \argmin_{x \in Q} \RelativeNorm{A x + b - \hat{y}_0}{B_1}^2
  =
  T_Q\bigl( 0, A\Adjoint B_1 (b - \hat{y}_0) \bigr),
\end{equation}
where $T_Q$ is the gradient step defined in
\cref{GradientMethodWithRelativelyInexactStochasticOracle::eq:GradientStep}
(w.r.t.\ the seminorm~$\RelativeNorm{\cdot}{B}$ with $B$ given by
\cref{\LocalName{eq:OperatorInducingSeminorm}}).
The identity in
\cref{\LocalName{eq:ReferencePointInConsistencyConditionForFunction}}
follows from the fact that
\begin{align*}
  \RelativeNorm{A x + b - \hat{y}_0}{B_1}^2
  &=
  \RelativeNorm{A x}{B_1}^2
  +
  \DualPairing{B_1 A x}{b - \hat{y}_0}
  +
  \RelativeNorm{b - \hat{y}_0}{B_1}^2
  \\
  &=
  \RelativeNorm{x}{B}^2
  +
  \DualPairing{A\Adjoint B_1 (b - \hat{y}_0)}{x}
  +
  \RelativeNorm{b - \hat{y}_0}{B_1}^2.
\end{align*}
Observe that
$
  A\Adjoint B_1 (b - \hat{y}_0)
  \in
  ( \ker(B_1 A) )\OrthogonalComplement
  =
  (\ker B)\OrthogonalComplement,
$
hence, according to
\cref{GradientMethodWithRelativelyInexactStochasticOracle::th:CharacterizationOfGradientStep},
the point $\hat{x}_0$ is well-defined.

\begin{lemma}
  \LocalLabel{th:ConsistencyConditionsArePreserved}
  \UsingNestedNamespace{CompositionWithAffineMapping}{ConsistencyConditionsArePreserved}
  It holds that
  \begin{gather}
    \LocalLabel{eq:FunctionIsConsistent}
    f(x) \geq \gamma_0 \RelativeNorm{x - \hat{x}_0}{B}^2,
    \qquad
    \forall x \in Q.
    \\
    \LocalLabel{eq:StochasticSubgradientIsConsistent}
    \Expectation_{\xi} [\RelativeDualNorm{g(x, \xi)}{B}^2]
    \leq
    2 L f(x),
    \qquad
    \forall x \in \VectorSpace{E}.
  \end{gather}
\end{lemma}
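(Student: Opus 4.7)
The plan is to prove the two claims separately, each by transporting the corresponding property from the outer space $\VectorSpace{E}_1$ through the pair $(A, B_1)$.

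For the first inequality, the target chain is
\[
  f(x) = F(A x + b) \geq \gamma_0 \RelativeNorm{A x + b - \hat{y}_0}{B_1}^2 \geq \gamma_0 \RelativeNorm{x - \hat{x}_0}{B}^2.
\]
The first step is just \cref{CompositionWithAffineMapping::eq:OuterFunctionIsConsistent} applied at $y = A x + b$, which lies in $Q_1$ whenever $x \in Q$. For the second step, I would exploit the fact, already noted in the paragraph preceding the lemma, that minimizing $\RelativeNorm{A x + b - \hat{y}_0}{B_1}^2$ over $Q$ is exactly the gradient step $T_Q(0, A\Adjoint B_1 (b - \hat{y}_0))$ with respect to $\RelativeNorm{\cdot}{B}$, so that $\hat{x}_0$ satisfies the functional optimality inequality of \cref{StochasticGradientMethod::th:CharacterizationOfGradientStep} with $\bar{x} = 0$, $g = A\Adjoint B_1 (b - \hat{y}_0)$, $T = \hat{x}_0$. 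Repacking the resulting linear and quadratic terms back into a squared seminorm gives
\[
  \RelativeNorm{A x + b - \hat{y}_0}{B_1}^2 \geq \RelativeNorm{A \hat{x}_0 + b - \hat{y}_0}{B_1}^2 + \RelativeNorm{x - \hat{x}_0}{B}^2 \geq \RelativeNorm{x - \hat{x}_0}{B}^2
\]
for all $x \in Q$, which closes the chain.

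For the second inequality, the main ingredient is the general operator-norm bound
\[
  \RelativeDualNorm{A\Adjoint s}{B} \leq \RelativeDualNorm{s}{B_1}, \qquad \forall s \in \VectorSpace{E}_1\Dual,
\]
which comes in one line from \eqref{eq:SquaredDualNorm}: writing $\tfrac{1}{2} \RelativeDualNorm{A\Adjoint s}{B}^2 = \sup_{x \in \VectorSpace{E}} [\DualPairing{s}{A x} - \tfrac{1}{2} \RelativeNorm{A x}{B_1}^2]$ and enlarging the supremum from $A(\VectorSpace{E})$ to all of $\VectorSpace{E}_1$ yields $\tfrac{1}{2} \RelativeDualNorm{s}{B_1}^2$. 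Applying this pointwise with $s = G_p(A x + b, \xi)$, taking expectation over $\xi$, and chaining with \cref{CompositionWithAffineMapping::eq:StochasticSubgradientForOuterFunctionIsConsistent} together with $f_p(x) = F_p(A x + b)$ from \cref{CompositionWithAffineMapping::eq:ApproximationFunction} gives the desired bound directly.

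The subtle step, and the one I expect to be the main obstacle, is the second inequality in the first chain. A naive triangle-inequality argument of the form $\RelativeNorm{A x + b - \hat{y}_0}{B_1} \geq \RelativeNorm{A (x - \hat{x}_0)}{B_1} = \RelativeNorm{x - \hat{x}_0}{B}$ would require $A \hat{x}_0 + b = \hat{y}_0$, which in general fails (typically $\hat{y}_0 \notin Q_1$). The resolution is that $\hat{x}_0$ is by construction a proximal-type minimizer of the squared seminorm over $Q$, so strong convexity of the quadratic supplies the needed $\RelativeNorm{x - \hat{x}_0}{B}^2$ term for free via \cref{StochasticGradientMethod::th:CharacterizationOfGradientStep}.
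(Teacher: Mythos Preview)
Your proposal is correct and follows essentially the same route as the paper's proof: for the first inequality you use the outer consistency condition and then the functional optimality characterization of $\hat{x}_0 = T_Q(0, A\Adjoint B_1(b-\hat{y}_0))$ from \cref{StochasticGradientMethod::th:CharacterizationOfGradientStep} to get the strengthened bound $\RelativeNorm{Ax+b-\hat{y}_0}{B_1}^2 \geq \RelativeNorm{A\hat{x}_0+b-\hat{y}_0}{B_1}^2 + \RelativeNorm{x-\hat{x}_0}{B}^2$, exactly as the paper does; for the second inequality your bound $\RelativeDualNorm{A\Adjoint s}{B} \leq \RelativeDualNorm{s}{B_1}$ via \eqref{eq:SquaredDualNorm} is the paper's argument phrased through the squared-norm identity rather than the definition \eqref{eq:DualNorm}, which is an immaterial difference.
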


\begin{proof}
  \UsingNestedNamespace{CompositionWithAffineMapping::ConsistencyConditionsArePreserved}{Proof}

  Let us prove \cref{\MasterName{eq:FunctionIsConsistent}}.
  Let $x \in Q$ be arbitrary.
  From
  \cref{%
    CompositionWithAffineMapping::eq:Problem,%
    CompositionWithAffineMapping::eq:OuterFunctionIsConsistent%
  },
  we get
  \[
    f(x)
    =
    F(A x + b)
    \geq
    \gamma_0 \RelativeNorm{A x + b - \hat{y}_0}{B_1}^2.
  \]
  It remains to prove that
  $
    \RelativeNorm{A x + b - \hat{y}_0}{B_1}
    \geq
    \RelativeNorm{x - \hat{x}_0}{B},
  $
  or, more generally, that
  \begin{equation}
    \LocalLabel{eq:InequalityToProve}
    \RelativeNorm{A x + b - \hat{y}_0}{B_1}^2
    \geq
    \RelativeNorm{A \hat{x}_0 + b - \hat{y}_0}{B_1}^2
    +
    \RelativeNorm{x - \hat{x}_0}{B}^2.
  \end{equation}
  This follows from
  \cref{CompositionWithAffineMapping::eq:ReferencePointInConsistencyConditionForFunction}.
  Indeed, by
  \cref{GradientMethodWithRelativelyInexactStochasticOracle::th:CharacterizationOfGradientStep},
  we have
  \[
    2 \DualPairing{A\Adjoint B_1 (b - \hat{y}_0)}{x - \hat{x}_0}
    +
    \RelativeNorm{x}{B}^2
    \geq
    \RelativeNorm{\hat{x}_0}{B}^2
    +
    \RelativeNorm{x - \hat{x}_0}{B}^2.
  \]
  Rearranging and using \cref{CompositionWithAffineMapping::eq:Seminorm},
  we can rewrite this as follows:
  \[
    2 \DualPairing{B_1 A x}{b - \hat{y}_0}
    +
    \RelativeNorm{A x}{B_1}^2
    \geq
    2 \DualPairing{B_1 A \hat{x}_0}{b - \hat{y}_0}
    +
    \RelativeNorm{A \hat{x}_0}{B_1}^2
    +
    \RelativeNorm{x - \hat{x}_0}{B}^2.
  \]
  Adding $\RelativeNorm{b - \hat{y}_0}{B_1}^2$ to both sides
  and completing the squares, we get \cref{\LocalName{eq:InequalityToProve}}.

  Let us prove \cref{\MasterName{eq:StochasticSubgradientIsConsistent}}.
  Let $x \in \VectorSpace{E}$ be arbitrary.
  According to
  \cref{%
    eq:DualNorm,%
    CompositionWithAffineMapping::eq:StochasticSubgradient,%
    CompositionWithAffineMapping::eq:Seminorm%
  },
  \begin{align*}
    \RelativeDualNorm{g(x, \xi)}{B}
    &=
    \sup_{h \in \VectorSpace{E}}
    \SetBuilder{\DualPairing{g(x, \xi)}{h}}{\RelativeNorm{h}{B} \leq 1}
    \\
    &=
    \sup_{h \in \VectorSpace{E}}
    \SetBuilder{
      \DualPairing{G(A x + b, \xi)}{A h}
    }{
      \RelativeNorm{A h}{B_1} \leq 1
    }
    \leq
    \RelativeDualNorm{G(A x + b, \xi)}{B_1}.
  \end{align*}
  Combining this with
  \cref{%
    CompositionWithAffineMapping::eq:StochasticSubgradientForOuterFunctionIsRelativelyBounded,%
    CompositionWithAffineMapping::eq:Problem%
  },
  we obtain
  \[
    \Expectation_{\xi} [\RelativeDualNorm{g(x, \xi)}{B}^2]
    \leq
    \Expectation_{\xi} [\RelativeDualNorm{G(A x + b, \xi)}{B_1}^2]
    \leq
    2 L F(A x + b)
    =
    2 L f(x).
    \qedhere
  \]
\end{proof}
  \section{Applications in Semidefinite Optimization}
\label{sec:ApplicationsInSemidefiniteOptimization}

Let us present several examples of relatively inexact stochastic subgradient
oracles, which are suitable for various functions of matrix eigen- and
singular values.

In this section, our random vectors belong to $\RealField^n$.
Therefore, we will use notation $\Norm{\cdot}$ for the standard Euclidean norm.

\subsection{Approximating Eigenvectors and Singular Vectors}

\subsubsection{Relatively Inexact Eigen- And Singular Vectors}

\begin{definition}[Relatively inexact maximal eigenvector]
  \label{def:RelativelyInexactMaximalEigenvector}
  Given a matrix~$A \in \SymmetricPsdMatrices{n}$,
  a scalar~$\delta \in \OpenOpenInterval{0}{1}$
  and a random vector~$\hat{v} \in \RealField^n$,
  we say that $\hat{v}$ is a \emph{$\delta$-relatively inexact stochastic
  maximal unit eigenvector} of~$A$
  iff $\Norm{\hat{v}} = 1$ (a.s.) and
  \[
    \Expectation \InnerProduct{A \hat{v}}{\hat{v}}
    \geq
    (1 - \delta) \MaxEigenValue(A).
  \]
  A \emph{relatively inexact stochastic maximal eigenvector oracle} is
  a procedure $\MaxEigenVector$ that takes
  a matrix $A \in \SymmetricPsdMatrices{n}$
  and a scalar $\delta \in \OpenOpenInterval{0}{1}$, and returns
  a random vector $\hat{v} = \MaxEigenVector(A, \delta)$
  such that $\hat{v}$ is a $\delta$-relatively inexact stochastic maximal
  unit eigenvector of~$A$.
\end{definition}

Sometimes, it is necessary to have some guarantees not only for the Rayleigh
quotient $\InnerProduct{A \hat{v}}{\hat{v}}$ but also for its powers.
Since the expectation is not, in general, invariant w.r.t.\ taking powers,
it makes sense to introduce the following generalization of
\cref{def:RelativelyInexactMaximalEigenvector}.

\begin{definition}
  Let $A \in \SymmetricPsdMatrices{n}$ be a matrix,
  and let $\delta \in \OpenOpenInterval{0}{1}$
  and $p \in \OpenOpenInterval{0}{\infty}$ be scalars.
  A random vector~$\hat{v} \in \RealField^n$
  is called a \emph{$\delta$-relatively inexact stochastic
  maximal unit eigenvector of~$A$ of degree~$p$}
  iff $\Norm{\hat{v}} = 1$ (a.s.) and
  \begin{equation}
    \label{eq:RelativelyInexactMaxEigenVectorOfDegree}
    \Expectation[\InnerProduct{A \hat{v}}{\hat{v}}^p]
    \geq
    (1 - \delta) [\MaxEigenValue(A)]^p.
  \end{equation}
  A \emph{relatively inexact stochastic maximal eigenvector oracle of
  degree~$p \in \OpenOpenInterval{0}{\infty}$}
  is a procedure $\MaxEigenVector_p$ that
  accepts a matrix~$A \in \SymmetricPsdMatrices{n}$ and a scalar
  $\delta \in \OpenOpenInterval{0}{1}$,
  and returns a random vector~$\hat{v} = \MaxEigenVector_p(A, \delta)$
  such that $\hat{v}$ is a $\delta$-relatively inexact stochastic maximal unit
  eigenvector of~$A$ of degree~$p$.
\end{definition}

Note that an oracle for $\MaxEigenVector_p(A, \delta)$
can be easily implemented in terms of that for
$\MaxEigenVector(A, \delta)$.
In what follows, we refer to this implementation as the default one.

\begin{SimpleAlgorithm}[
  label = alg:MakeMaxEigenVectorOfDegree,
  width = 0.5\linewidth
]
  \begin{AlgorithmGroup}[Signature]
    $\MaxEigenVector_p = \MakeMaxEigenVectorOfDegree(p, \MaxEigenVector)$.
  \end{AlgorithmGroup}
  \AlgorithmGroupSeparator

  \begin{AlgorithmGroup}[Input]
    $p$---degree [$p \in \OpenOpenInterval{0}{\infty}$];
    $\MaxEigenVector$---relatively inexact stochastic maximal eigenvector oracle.
  \end{AlgorithmGroup}
\begin{AlgorithmGroup}[Output]
    $\MaxEigenVector_p$---relatively inexact stochastic maximal eigenvector
    oracle of degree~$p$.
  \end{AlgorithmGroup}
\AlgorithmGroupSeparator

  \begin{AlgorithmGroup}
    \begin{AlgorithmSteps}
      \AlgorithmStep
        If $p \leq 1$, return $\MaxEigenVector$.
      \AlgorithmStep
        Otherwise, return the function defined by
        \[
          (A, \delta)
          \mapsto
          \MaxEigenVector\bigl( A, 1 - (1 - \delta)^{1 / p} \bigr).
        \]
    \end{AlgorithmSteps}
  \end{AlgorithmGroup}
\end{SimpleAlgorithm}

\begin{lemma}
  The output in \cref{alg:MakeMaxEigenVectorOfDegree} is indeed a relatively
  inexact stochastic eigenvector oracle of degree~$p$.
\end{lemma}

\begin{proof}
  Let $A \in \SymmetricPsdMatrices{n}$
  and $\delta \in \OpenOpenInterval{0}{1}$ be arbitrary,
  and let $\hat{v} \DefinedEqual \MaxEigenVector_p(A, \delta)$
  be the output of the $\MaxEigenVector_p$ procedure constructed by
  \cref{alg:MakeMaxEigenVectorOfDegree}.
  Let us show that $\hat{v}$ is a $\delta$-relatively inexact stochastic
  eigenvector of~$A$ of degree~$p$.

  Clearly, $\Norm{\hat{v}} = 1$ (a.s.) since $\hat{v}$ is the output
  of the $\MaxEigenVector$ procedure.
  Therefore, we only need to justify
  inequality~\eqref{eq:RelativelyInexactMaxEigenVectorOfDegree}.
  We may also assume that $\MaxEigenValue(A) > 0$ since otherwise the inequality
  is trivial in view of positive semidefiniteness of~$A$.

  If $p \leq 1$, then, by construction, $\hat{v} = \MaxEigenVector(A, \delta)$,
  and hence
  \[
    \Expectation \InnerProduct{A \hat{v}}{\hat{v}}
    \geq
    (1 - \delta) \MaxEigenValue(A).
  \]
  Since $\InnerProduct{A \hat{v}}{\hat{v}} \leq \MaxEigenValue(A)$
  and the function $p \mapsto \tau^p$ is monotonically decreasing
  on~$\OpenClosedInterval{0}{1}$ for any fixed
  $\tau \in \ClosedClosedInterval{0}{1}$, it follows that
  \[
    \Expectation\biggl[
      \biggl(
        \frac{\InnerProduct{A \hat{v}}{\hat{v}}}{\MaxEigenValue(A)}
      \biggr)^p
    \biggr]
    \geq
    \Expectation \frac{\InnerProduct{A \hat{v}}{\hat{v}}}{\MaxEigenValue(A)}
    \geq
    1 - \delta.
  \]
  Rearranging, we obtain
  inequality~\eqref{eq:RelativelyInexactMaxEigenVectorOfDegree}.

  If $p > 1$, then $\hat{v} = \MaxEigenVector(A, \delta_p)$
  for $\delta_p \DefinedEqual 1 - (1 - \delta)^{1 / p}$, which means that
  \[
    \Expectation \InnerProduct{A \hat{v}}{\hat{v}}
    \geq
    (1 - \delta_p) \MaxEigenValue(A)
    =
    (1 - \delta)^{1 / p} \MaxEigenValue(A).
  \]
  Applying now Jensen's inequality (using the fact that $t \mapsto t^p$ is a
  convex and monotonically increasing function on~$\NonnegativeRay$), we obtain
  \[
    \Expectation[\InnerProduct{A \hat{v}}{\hat{v}}^p]
    \geq
    [\Expectation \InnerProduct{A \hat{v}}{\hat{v}}]^p
    \geq
    \bigl[ (1 - \delta)^{1 / p} \MaxEigenValue(A) \bigr]^p
    =
    (1 - \delta) [\MaxEigenValue(A)]^p.
    \qedhere
  \]
\end{proof}

\begin{definition}[Relatively inexact maximal left/right singular vectors]
  Let $A \in \RealMatrices{m}{n}$ be a matrix
  and $\delta \in \OpenOpenInterval{0}{1}$ be a scalar.
  A random vector~$\hat{u} \in \RealField^m$ is called a
  \emph{$\delta$-relatively inexact stochastic maximal left singular vector}
  of~$A$ iff $\Norm{\hat{u}} = 1$ (a.s.) and
  \[
    \Expectation \Norm{A\Transpose \hat{u}}
    \geq
    (1 - \delta) \MaxSingularValue(A).
  \]
  A \emph{relatively inexact stochastic maximal left singular vector oracle}
  is a procedure $\MaxLeftSingularVector$ that takes
  a matrix~$A \in \RealMatrices{n}{m}$ and
  a scalar~$\delta \in \OpenOpenInterval{0}{1}$,
  and returns a random vector~$\hat{u} = \MaxLeftSingularVector(A, \delta)$
  such that $\hat{u}$ is a $\delta$-relatively inexact stochastic maximal left
  singular vector.

  Similarly, a random vector~$\hat{v} \in \RealField^n$ is called
  a \emph{$\delta$-relatively inexact maximal right singular vector} of~$A$
  iff $\Norm{\hat{v}} = 1$ (a.s.) and
  \[
    \Expectation \Norm{A \hat{v}}
    \geq
    (1 - \delta) \MaxSingularValue(A).
  \]
\end{definition}

The maximal left/right singular vector oracles can be implemented in terms of
the maximal eigenvector oracle of degree~$1 / 2$.

\begin{center}
  \begin{minipage}[t]{0.4\linewidth}
    \begin{SimpleAlgorithm}[%
      label = alg:MakeMaxLeftSingularVector,
      width = \linewidth
    ]
      \begin{AlgorithmGroup}[Signature]
        $\MaxLeftSingularVector = \MakeMaxLeftSingularVector(\MaxEigenVector_{1 / 2})$
      \end{AlgorithmGroup}
      \AlgorithmGroupSeparator

      \begin{AlgorithmGroup}[Input]
        $\MaxEigenVector_{1 / 2}$---relatively inexact maximal eigenvector
        oracle of degree~$1 / 2$.
      \end{AlgorithmGroup}
      \begin{AlgorithmGroup}[Output]
        $\MaxLeftSingularVector$---relatively inexact maximal left singular
        vector oracle.
      \end{AlgorithmGroup}
      \AlgorithmGroupSeparator

      \begin{AlgorithmGroup}
        Return the function
        $
          (A, \delta)
          \mapsto
          \MaxEigenVector_{1 / 2}(A A\Transpose, \delta).
        $
      \end{AlgorithmGroup}
    \end{SimpleAlgorithm}
  \end{minipage}
  \hspace{2em}
  \begin{minipage}[t]{0.45\linewidth}
      \begin{SimpleAlgorithm}[%
        label = alg:MakeMaxRightSingularVector,
        width = \linewidth
      ]
      \begin{AlgorithmGroup}[Signature]
        $\MaxRightSingularVector = \MakeMaxRightSingularVector(\MaxEigenVector_{1 / 2})$
      \end{AlgorithmGroup}
      \AlgorithmGroupSeparator

      \begin{AlgorithmGroup}[Input]
        $\MaxEigenVector_{1 / 2}$---relatively inexact maximal eigenvector
        oracle of degree~$1 / 2$.
      \end{AlgorithmGroup}
      \begin{AlgorithmGroup}[Output]
        $\MaxRightSingularVector$---relatively inexact maximal left singular
        vector oracle.
      \end{AlgorithmGroup}
      \AlgorithmGroupSeparator

      \begin{AlgorithmGroup}
        Return the function
        $
          (A, \delta)
          \mapsto
          \MaxEigenVector_{1 / 2}(A\Transpose A, \delta).
        $
      \end{AlgorithmGroup}
    \end{SimpleAlgorithm}
  \end{minipage}
\end{center}

\begin{lemma}
  \Cref{alg:MakeMaxLeftSingularVector,alg:MakeMaxRightSingularVector}
  indeed construct relatively inexact maximal left/right singular vector
  oracles, respectively.
\end{lemma}

\begin{proof}
  Let $A \in \RealMatrices{m}{n}$ and $\delta \in \OpenOpenInterval{0}{1}$
  be arbitrary, and let
  $\hat{u} \DefinedEqual \MaxLeftSingularVector(A, \delta)$
  be the output of the $\MaxLeftSingularVector$ procedure constructed by
  \cref{alg:MakeMaxLeftSingularVector}.
  By our definitions,
  $\hat{u} = \MaxEigenVector_{1 / 2}(A A\Transpose, \delta)$.
  Since $\MaxEigenVector_{1 / 2}$ is a relatively inexact stochastic maximal
  eigenvector oracle of degree~$1 / 2$, this means that $\Norm{\hat{u}} = 1$
  (a.s.) and
  \[
    \Expectation \Norm{A\Transpose \hat{u}}
    =
    \Expectation[\InnerProduct{A A\Transpose \hat{u}}{\hat{u}}^{1 / 2}]
    \geq
    (1 - \delta) [\MaxEigenValue(A A\Transpose)]^{1 / 2}
    =
    (1 - \delta) \MaxSingularValue(A).
  \]
  This proves that $\hat{u}$ is a $\delta$-relatively inexact stochastic
  maximal left singular vector of~$A$.

  Thus, \cref{alg:MakeMaxLeftSingularVector} is indeed correct.
  The justification for \cref{alg:MakeMaxRightSingularVector} is completely
  similar.
\end{proof}

\begin{definition}[Relatively inexact maximal pair of singular vectors]
  Given a matrix $A \in \RealMatrices{m}{n}$,
  a scalar $\delta \in \OpenOpenInterval{0}{1}$,
  and random vectors~$\hat{u} \in \RealField^m$ and $\hat{v} \in \RealField^n$,
  we say that $(\hat{u}, \hat{v})$ is a
  \emph{$\delta$-relatively inexact stochastic maximal pair of singular vectors}
  of~$A$ iff $\Norm{\hat{u}} = \Norm{\hat{v}} = 1$ (a.s.) and
  \[
    \Expectation \InnerProduct{A \hat{v}}{\hat{u}}
    \geq
    (1 - \delta) \MaxSingularValue(A).
  \]
  A \emph{relatively inexact stochastic maximal pair-of-singular-vectors oracle}
  is a procedure $\MaxSingularVectorPair$ that accepts
  a matrix~$A \in \RealMatrices{m}{n}$
  and a scalar~$\delta \in \OpenOpenInterval{0}{1}$,
  and returns a pair of random
  vectors~$(\hat{u}, \hat{v}) = \MaxSingularVectorPair(A, \delta)$
  such that $(\hat{u}, \hat{v})$ is a $\delta$-relatively inexact stochastic
  maximal singular vector pair.
\end{definition}

To construct a pair of maximal singular vectors, we may first compute a left
singular vector and then construct the right one from it.
Alternatively, we can compute the right singular vector and then construct
the left one from it.

\begin{center}
  \begin{minipage}[t]{0.45\linewidth}
    \begin{SimpleAlgorithm}[
      label = alg:MakeMaxSingularVectorPairFromLeft,
      width = \linewidth
    ]
      \begin{AlgorithmGroup}[Signature]
        $
          \MaxSingularVectorPair
          =
          \MakeMaxSingularVectorPairFromLeft(\MaxLeftSingularVector)
        $
      \end{AlgorithmGroup}
      \AlgorithmGroupSeparator

      \begin{AlgorithmGroup}[Input]
        $\MaxLeftSingularVector$---relatively inexact maximal left singular
        vector oracle.
      \end{AlgorithmGroup}
      \begin{AlgorithmGroup}[Output]
        $\MaxSingularVectorPair$---relatively inexact maximal
        pair-of-singular-vectors oracle.
      \end{AlgorithmGroup}
      \AlgorithmGroupSeparator

      \begin{AlgorithmGroup}
        \begin{AlgorithmSteps}
          \AlgorithmStep
            Return the function
            $\MaxSingularVectorPair = \MaxSingularVectorPair(A, \delta)$
            defined by the following computation:
            \begin{AlgorithmSteps}
              \AlgorithmStep
                $\hat{u} \DefinedEqual \MaxLeftSingularVector(A, \delta)$.
              \AlgorithmStep
                $\hat{v}' \DefinedEqual A\Transpose \hat{u}$.
              \AlgorithmStep
                $\hat{v} \DefinedEqual \frac{\hat{v}'}{\Norm{\hat{v}'}}$.
              \AlgorithmStep
                Return $(\hat{u}, \hat{v})$.
            \end{AlgorithmSteps}
        \end{AlgorithmSteps}
      \end{AlgorithmGroup}
    \end{SimpleAlgorithm}
  \end{minipage}
  \hspace{1em}
  \begin{minipage}[t]{0.47\linewidth}
    \begin{SimpleAlgorithm}[
      label = alg:MakeMaxSingularVectorPairFromRight,
      width = \linewidth
    ]
      \begin{AlgorithmGroup}[Signature]
        $
          \MaxSingularVectorPair
          =
          \MakeMaxSingularVectorPairFromRight(\MaxRightSingularVector)
        $
      \end{AlgorithmGroup}
      \AlgorithmGroupSeparator

      \begin{AlgorithmGroup}[Input]
        $\MaxRightSingularVector$---relatively inexact maximal right singular
        vector oracle.
      \end{AlgorithmGroup}
      \begin{AlgorithmGroup}[Output]
        $\MaxSingularVectorPair$---relatively inexact maximal
        pair-of-singular-vectors oracle.
      \end{AlgorithmGroup}
      \AlgorithmGroupSeparator

      \begin{AlgorithmGroup}
        \begin{AlgorithmSteps}
          \AlgorithmStep
            Return the function
            $\MaxSingularVectorPair = \MaxSingularVectorPair(A, \delta)$
            defined by the following computation:
            \begin{AlgorithmSteps}
              \AlgorithmStep
                $\hat{v} \DefinedEqual \MaxRightSingularVector(A, \delta)$.
              \AlgorithmStep
                $\hat{u}' \DefinedEqual A \hat{v}$.
              \AlgorithmStep
                $\hat{u} \DefinedEqual \frac{\hat{u}'}{\Norm{\hat{u}'}}$.
              \AlgorithmStep
                Return $(\hat{u}, \hat{v})$.
            \end{AlgorithmSteps}
        \end{AlgorithmSteps}
      \end{AlgorithmGroup}
    \end{SimpleAlgorithm}
  \end{minipage}
\end{center}

\begin{lemma}
  Both
  \cref{%
    alg:MakeMaxSingularVectorPairFromLeft,%
    alg:MakeMaxSingularVectorPairFromRight%
  }
  are correct in the sense that their output is indeed a relatively inexact
  stochastic maximal pair-of-singular-vectors oracle.
\end{lemma}

\begin{proof}
  We only prove the claim for \cref{alg:MakeMaxSingularVectorPairFromLeft},
  as the proof for \cref{alg:MakeMaxSingularVectorPairFromRight} is completely
  similar.

  Let $A \in \RealMatrices{m}{n}$ and $\delta \in \OpenOpenInterval{0}{1}$
  be arbitrary, and let $(\hat{u}, \hat{v}) = \MaxSingularVectorPair(A, \delta)$
  be the output of the $\MaxSingularVectorPair$ procedure constructed by
  \cref{alg:MakeMaxSingularVectorPairFromLeft}.
  From our definition of~$\hat{u}$ and the fact that $\MaxLeftSingularVector$
  is a relatively inexact maximal left singular vector oracle, it follows that
  $\Norm{\hat{u}} = 1$ (a.s.) and
  \[
    \Expectation \Norm{A\Transpose \hat{u}}
    \geq
    (1 - \delta) \MaxSingularValue(A).
  \]
  On the other hand, from the definitions of~$\hat{v}'$ and~$\hat{v}$,
  it is clear that $\Norm{\hat{v}} = 1$ and
  \[
    \InnerProduct{A \hat{v}}{\hat{u}}
    =
    \InnerProduct{\hat{v}}{\hat{v}'}
    =
    \Norm{\hat{v}'}
    =
    \Norm{A\Transpose \hat{u}}.
  \]
  Combining the above two displays, we conclude that $(\hat{u}, \hat{v})$
  is a $\delta$-relatively inexact stochastic maximal pair of singular vectors
  of~$A$.
\end{proof}
\subsubsection{Power Method}

The simplest oracle for $\MaxEigenVector(A, \delta)$ is given by the
\emph{Power method}.
The idea is to choose a random starting vector $\xi \in \RealField^n$
and then compute
\[
  \hat{u}_p \DefinedEqual \frac{A^p \xi}{\Norm{A^p \xi}}
\]
for a sufficiently large integer power~$p \geq 1$.

For numerical stability, this algorithm is typically implemented in a slightly
different form where the normalization is performed after each matrix-vector
multiplication.

\begin{SimpleAlgorithm}[
  title = {Power Method},
  label = {alg:PowerMethod},
  width = 0.45\linewidth
]
  \begin{AlgorithmGroup}[Signature]
    $\hat{u} = \PowerMethod(A, p)$.
  \end{AlgorithmGroup}
  \AlgorithmGroupSeparator

  \begin{AlgorithmGroup}[Input]
    Matrix~$A \in \SymmetricPsdMatrices{n}$,
    number of matrix-vector products~$p$ [integer $\geq 1$].
  \end{AlgorithmGroup}
  \begin{AlgorithmGroup}[Output]
    Random vector~$\hat{u} \in \UnitSphere{n - 1}$.
  \end{AlgorithmGroup}
  \AlgorithmGroupSeparator

  \begin{AlgorithmGroup}
    \begin{AlgorithmSteps}
      \AlgorithmStep
        Sample $u_0 \SampledFrom \UniformDistribution{\UnitSphere{n - 1}}$.
      \AlgorithmStep
        Iterate for $k = 1, \ldots, p$:
        \begin{AlgorithmSteps}
          \AlgorithmStep
            $\hat{u}_k' \DefinedEqual A \hat{u}_{k - 1}$.
          \AlgorithmStep
            $\hat{u}_k \DefinedEqual \frac{\hat{u}_k'}{\Norm{\hat{u}_k'}}$.
        \end{AlgorithmSteps}
      \AlgorithmStep
        Return $\hat{u}_p$.
    \end{AlgorithmSteps}
  \end{AlgorithmGroup}
\end{SimpleAlgorithm}

The standard convergence result about the Power Method is as follows.

\begin{theorem}[Theorem~3.1 in~\cite{Kuczyski-Wozniakowski-92-EstimatingLargest}]
  \label{th:ConvergenceRateOfPowerMethod}
  Suppose that the input in \cref{alg:PowerMethod} is such that
  $n \geq 8$ and $p \geq 2$.
  Then, the output vector~$\hat{u}$ is a $\delta_p$-relatively inexact
  stochastic unit eigenvector of~$A$, where
  \[
    \delta_p \DefinedEqual 0.871 \frac{\ln n}{p}.
  \]
\end{theorem}

From the above theorem, we know how to,
given an accuracy~$\delta \in \OpenOpenInterval{0}{1}$, choose the number of
iterations~$p$ sufficient to guarantee that the output of the Power method
is a $\delta$-relatively inexact stochastic unit eigenvector of~$A$.
This allows us to build an oracle for~$\MaxEigenVector(A, \delta)$.

\begin{SimpleAlgorithm}[
  title = {Power Oracle for Maximal Eigenvector},
  label = alg:PowerOracle,
  width = 0.5\linewidth
]
  \begin{AlgorithmGroup}[Signature]
    $\hat{u} = \MaxEigenVectorPower(A, \delta)$.
  \end{AlgorithmGroup}
  \AlgorithmGroupSeparator

  \begin{AlgorithmGroup}[Input]
    Matrix~$A \in \SymmetricPsdMatrices{n}$,
    accuracy~$\delta \in \OpenOpenInterval{0}{1}$.
  \end{AlgorithmGroup}
  \begin{AlgorithmGroup}[Output]
    Random vector~$\hat{u} \in \RealField^n$ such that
    $\hat{u}$ is a $\delta$-relatively inexact stochastic maximal eigenvector
    of~$A$.
  \end{AlgorithmGroup}
  \AlgorithmGroupSeparator

  \begin{AlgorithmGroup}
    \begin{AlgorithmSteps}
      \AlgorithmStep
        Set $p \DefinedEqual \Ceil{0.871 \frac{\ln n}{\delta}}$.
      \AlgorithmStep
        Return $\PowerMethod(A, p)$.
    \end{AlgorithmSteps}
  \end{AlgorithmGroup}
\end{SimpleAlgorithm}

\begin{theorem}
  For any $n \geq 8$, \cref{alg:PowerOracle} indeed returns
  a $\delta$-relatively inexact stochastic maximal eigenvector of~$A$.
  The running time of the method is
  \[
    \Ceil[\Big]{\frac{\ln n}{\delta}}
    \times
    \RunTime\bigl( \MatrixVectorProduct(A) \bigr)
    +
    \BigO\Bigl( \frac{n \ln n}{\delta} \Bigr),
  \]
  where $\RunTime\bigl( \MatrixVectorProduct(A) \bigr)$ is the running time of
  a matrix-vector multiplication for~$A$.
\end{theorem}

\begin{proof}
  The fact that $\hat{u}$ is indeed a $\delta$-relatively inexact stochastic
  maximal eigenvector of~$A$ follows from \cref{th:ConvergenceRateOfPowerMethod}
  (note that, since $n \geq 8$ and $\delta \in \OpenOpenInterval{0}{1}$,
  we have $p \geq \Ceil{0.871 \cdot \ln 8} = \Ceil{1.81\ldots} = 2$).

  To establish the complexity bound, note that, at each iteration
  of the loop, only one matrix-vector multiplication is performed.
  Therefore, the total number of matrix vector multiplications is
  \[
    p
    =
    \Ceil[\Big]{0.871 \frac{\ln n}{\delta}}
    \leq
    \Ceil[\Big]{\frac{\ln n}{\delta}}.
  \]
  The number of auxiliary operations at each iteration is~$\BigO(n)$.
  Therefore, the extra running time complexity is
  \[
    \BigO(n p)
    \leq
    \BigO\Bigl( n \Bigl( \frac{\ln n}{\delta} + 1 \Bigr) \Bigr)
    =
    \BigO\Bigl( \frac{n \ln n}{\delta} \Bigr)
  \]
  since $\frac{\ln n}{\delta} \geq 1$ for $n \geq 8$.
\end{proof}
\subsubsection{Lanczos Algorithm}

The Lanczos method chooses a random vector~$\xi \in \UnitSphere{n - 1}$
and then searches for a maximizer of the Rayleigh quotient in the $p$-th Krylov
subspace generated by~$\xi$:
\[
  \hat{v}_p
  \in
  \ArgmaxSet \SetBuilder[\big]{
    \InnerProduct{A v}{v}
  }{
    v \in \KrylovSubspace_p(A, \xi) \cap \UnitSphere{n - 1}
  },
\]
where
\[
  \KrylovSubspace_p(A, \xi)
  \DefinedEqual
  \LinearHull\Set{\xi, A \xi, \ldots, A^p \xi}.
\]
We refer to any vector~$\hat{v}_p$ satisfying the above inclusion as
a \emph{Lanczos maximal eigenvector} of order~$p$ for $(A, \xi)$.
Such a vector can be found very efficiently by using only $\BigO(p)$
matrix-vector products with matrix~$A$.

The key step is to first find an orthonormal basis for the Krylov
subspace~$\KrylovSubspace_p(A, \xi)$---a
matrix~$Q \in \RealMatrices{n}{(p + 1)}$ with $Q\Transpose Q = I$
(identity matrix) and $\Image Q = \KrylovSubspace_p(A, \xi)$---in which
$A$ is tridiagonal:
\[
  Q\Transpose A Q = \TridiagonalMatrix(\alpha, \beta),
\]
where $\TridiagonalMatrix(\alpha, \beta)$ the $(p + 1) \times (p + 1)$
symmetric tridiagonal matrix with some (known)
vector~$\alpha \in \RealField^{p + 1}$ on the main diagonal and
some (known) vector~$\beta \in \RealField^p$ on the sub- and super diagonals.
In what follows, we refer to such an $(\alpha, \beta, Q)$ as a \emph{Lanczos
tridiagonalization triple} of order~$p$ for $(A, \xi)$.
This triple can be constructed using the following algorithm.

\begin{SimpleAlgorithm}[
  title = {Lanczos Tridiagonalization},
  label = alg:LanczosTridiagonalization,
  width = 0.8\linewidth
]
  \begin{AlgorithmGroup}[Signature]
    $(\alpha, \beta, Q) = \LanczosTridiagonalization(A, \xi, p)$.
  \end{AlgorithmGroup}
  \AlgorithmGroupSeparator

  \begin{AlgorithmGroup}[Input]
    Matrix~$A \in \SymmetricMatrices{n}$,
    vector~$\xi \in \UnitSphere{n - 1}$,
    number of iterations~$p \geq 1$.
  \end{AlgorithmGroup}
  \begin{AlgorithmGroup}[Output]
    Vectors~$\alpha \in \RealField^{p + 1}$, $\beta \in \RealField^p$,
    and matrix~$Q \in \RealMatrices{n}{(p + 1)}$ such that $(\alpha, \beta, Q)$
    is a Lanczos tridiagonalization triple of order~$p$ for~$(A, \xi)$.

  \end{AlgorithmGroup}
  \AlgorithmGroupSeparator

  \begin{AlgorithmGroup}
    \begin{AlgorithmSteps}
      \AlgorithmStep
        Set $q_0 \DefinedEqual \xi$.
        Compute
        $q_0' \DefinedEqual A q_0$,
        $\alpha_0 \DefinedEqual \InnerProduct{q_0'}{q_0}$,
        and $r_0 \DefinedEqual q_0' - \alpha_0 q_0$.
      \AlgorithmStep
        Iterate for $k = 0, \ldots, p - 1$:
        \begin{AlgorithmSteps}
          \AlgorithmStep
            Compute
            $\beta_k \DefinedEqual \Norm{r_k}$,
            $q_{k + 1} \DefinedEqual r_k / \beta_k$,
            and $q_{k + 1}' \DefinedEqual A q_k$.
          \AlgorithmStep
            Compute
            $\alpha_{k + 1} \DefinedEqual \InnerProduct{q_{k + 1}'}{q_k}$
            and
            $r_{k + 1} \DefinedEqual q_{k + 1}' - \alpha_{k + 1} q_{k + 1} - \beta_k q_k$.
        \end{AlgorithmSteps}
      \AlgorithmStep
        Return
        $\alpha \DefinedEqual (\alpha_0, \ldots, \alpha_p)$,
        $\beta \DefinedEqual (\beta_0, \ldots, \beta_{p - 1})$,
        $Q \DefinedEqual [q_0, \ldots, q_p]$.
    \end{AlgorithmSteps}
  \end{AlgorithmGroup}
\end{SimpleAlgorithm}

\begin{theorem}[Theorem~10.1.1 in~\cite{Golub-vanLoan-13-MatrixComputations}]
  \Cref{alg:LanczosTridiagonalization} is correct in the sense that it indeed
  produces a Lanczos tridiagonalization triple.
\end{theorem}

Combining \cref{alg:LanczosTridiagonalization} with any \emph{exact}
algorithm $\MaxEigenVectorOfTridiagonalMatrix$ for computing a maximal
unit eigenvector of a tridiagonal matrix, we get the Lanczos method for
computing a leading eigenvector of~$A$.

\begin{SimpleAlgorithm}[
  title = {Lanczos Algorithm},
  label = alg:LanczosAlgorithm,
  width = 0.7\linewidth,
]
  \begin{AlgorithmGroup}[Signature]
    $\hat{v} = \LanczosAlgorithm(A, p)$.
  \end{AlgorithmGroup}
  \AlgorithmGroupSeparator

  \begin{AlgorithmGroup}[Input]
    Matrix~$A \in \SymmetricMatrices{n}$,
    number of iterations~$p \geq 1$.
  \end{AlgorithmGroup}
  \begin{AlgorithmGroup}[Output]
    $\hat{v} \in \UnitSphere{n - 1}$---Lanczos maximal eigenvector of order~$p$
    for $(A, \xi)$ for a randomly sampled
    $\xi \DistributedAs \UniformDistribution{\UnitSphere{n - 1}}$.
  \end{AlgorithmGroup}
  \AlgorithmGroupSeparator

  \begin{AlgorithmGroup}
    \begin{AlgorithmSteps}
      \AlgorithmStep
        Sample $\xi \SampledFrom \UniformDistribution{\UnitSphere{n - 1}}$.
      \AlgorithmStep
        \label{step:LanczosTridiagonalization}
        Compute
        $(\alpha, \beta, Q) \DefinedEqual \LanczosTridiagonalization(A, \xi, p)$.
      \AlgorithmStep
        \label{step:ComputeMaxEigenVectorOfTridiagonalMatrix}
        Compute
        $\hat{x} \DefinedEqual \MaxEigenVectorOfTridiagonalMatrix(\alpha, \beta)$.
      \AlgorithmStep
        Return $\hat{v} \DefinedEqual Q \hat{x}$.
    \end{AlgorithmSteps}
  \end{AlgorithmGroup}
\end{SimpleAlgorithm}

In principle, we can use any algorithm in place of
$\MaxEigenVectorOfTridiagonalMatrix$.
However, for concreteness, we will assume that this is the standard
QR decomposition-based algorithm for computing eigenvectors and eigenvalues
of a symmetric tridiagonal matrix (see Section~8.3
in~\cite{Golub-vanLoan-13-MatrixComputations}).
The complexity of such an algorithm is~$\BigO(p^2)$, where
$p$ is the dimension of the tridiagonal matrix.
Even if the original dimension~$n$ of the matrix
was huge, the value of~$p$ is typically rather small, so this complexity is
affordable.

\begin{theorem}
  The vector~$\hat{v}$ returned by \cref{alg:LanczosAlgorithm} is indeed
  a Lanczos maximal eigenvector of order~$p$ for $(A, \xi)$.
\end{theorem}

\begin{proof}
  According to the guarantees of \cref{alg:LanczosTridiagonalization},
  at the end of \cref{step:LanczosTridiagonalization} of
  \cref{alg:LanczosAlgorithm}, the matrix~$Q$ is such that its columns form
  an orthonormal basis for $\KrylovSubspace_p(A, \xi)$
  and $Q\Transpose A Q = T$, where
  $T \DefinedEqual \TridiagonalMatrix(\alpha, \beta)$.
  Thus, any vector~$v$ from $\KrylovSubspace_p(A, \xi) \cap \UnitSphere{n - 1}$
  can be (uniquely) parameterized as $v = Q x$, where
  $x \in \UnitSphere{p}$.
  Consequently, $v$ is a Lanczos maximal eigenvector of order~$p$ for $(A, \xi)$
  iff $v = Q x$, where $x$ maximizes
  \[
    \InnerProduct{A (Q x)}{Q x}
    =
    \InnerProduct{Q\Transpose A Q x}{x}
    =
    \InnerProduct{T x}{x}
  \]
  over all $x \in \UnitSphere{p}$, or, equivalently, iff $x$ is a maximal unit
  eigenvector of~$T$.
  The claim now follows from the fact that
  $\hat{x}$ obtained at
  \cref{step:ComputeMaxEigenVectorOfTridiagonalMatrix} is indeed a maximal unit
  eigenvector of~$T$ and $\hat{v} = Q \hat{x}$.
\end{proof}

The classical convergence bound for the Lanczos algorithm is as follows.

\begin{theorem}[Theorem~3.2 in~\cite{Kuczyski-Wozniakowski-92-EstimatingLargest}]
  \label{th:ConvergenceRateForLanczosAlgorithm}
  Suppose that the input in \cref{alg:LanczosAlgorithm} is such that
  $A$ is positive semidefinite, $n \geq 8$ and $p \geq 3$.
  Then, the output $\hat{v}$ generated by the method is a
  $\delta_p$-relatively inexact stochastic unit eigenvector of~$A$, where
  \[
    \delta_p \DefinedEqual 2.575 \Bigl( \frac{\ln n}{p} \Bigr)^2.
  \]
\end{theorem}

Using the previous bound, we can now select~$p$ for any required~$\delta$
and thus construct the Lanczos oracle for $\MaxEigenVector(A, \delta)$.

\begin{SimpleAlgorithm}[
  title = {Lanczos Oracle for Computing Maximal Eigenvector},
  label = alg:LanczosOracle,
  width = 0.6\linewidth,
]
  \begin{AlgorithmGroup}[Signature]
    $\hat{v} = \MaxEigenVectorLanczos(A, \delta)$.
  \end{AlgorithmGroup}
  \AlgorithmGroupSeparator

  \begin{AlgorithmGroup}[Input]
    Matrix~$A \in \SymmetricMatrices{n}$,
    accuracy~$\delta \in \OpenOpenInterval{0}{1}$.
  \end{AlgorithmGroup}
  \begin{AlgorithmGroup}[Output]
    $\hat{v} \in \UnitSphere{n - 1}$---$\delta$-relatively inexact
    stochastic unit eigenvector of~$A$.
  \end{AlgorithmGroup}
  \AlgorithmGroupSeparator

  \begin{AlgorithmGroup}
    \begin{AlgorithmSteps}
      \AlgorithmStep
        Compute $p \DefinedEqual \Ceil{1.605 \frac{\ln n}{\sqrt{\delta}}}$.
      \AlgorithmStep
        Return $\LanczosAlgorithm(A, p)$.
    \end{AlgorithmSteps}
  \end{AlgorithmGroup}
\end{SimpleAlgorithm}

\begin{theorem}
  The output of \cref{alg:LanczosOracle} is indeed a $\delta$-relatively
  inexact stochastic unit eigenvector of~$A$.
  The total running time of the algorithm is
  \[
    \Bigl( 2 \frac{\ln n}{\sqrt{\delta}} + 1 \Bigr)
    \times
    \RunTime\bigl( \MatrixVectorProduct(A) \bigr)
    +
    \BigO\Bigl( \frac{n \ln n}{\sqrt{\delta}} \Bigr).
  \]
\end{theorem}

\begin{proof}
  The fact that $\hat{v}$ is indeed a $\delta$-relatively inexact stochastic
  unit eigenvector of~$A$ follows immediately from
  \cref{th:ConvergenceRateForLanczosAlgorithm}
  and the fact that $\sqrt{2.575} = 1.6046\ldots \leq 1.605$.

  To justify the time complexity, note that all matrix-vector products
  with~$A$ are performed only inside the call to
  $\LanczosTridiagonalization(A, p)$
  (one multiplication at each iteration plus one extra during initialization).
  The extra complexity at each iteration inside $\LanczosTridiagonalization$
  is $\BigO(n)$, and $\BigO(n)$ during initialization.
  The total extra complexity of the call to $\LanczosTridiagonalization(A, p)$
  is thus $\BigO(n p)$.

  Further, the complexity of $\MaxEigenVectorOfTridiagonalMatrix(\alpha, \beta)$
  is $\BigO(p^2) = \BigO(n p)$ since $p \leq n$
  (otherwise, the Lanczos algorithm terminates).
\end{proof}
\subsection{Maximal Eigenvalue of Symmetric Matrix}

\begin{lemma}
  Consider the function $\Map{f}{\SymmetricMatrices{n}}{\RealField}$
  defined by
  \[
    f(X) \DefinedEqual \MaxEigenValue(X).
  \]
  Let $\delta \in \OpenOpenInterval{0}{1}$.
  Consider the oracle $\hat{g}$ defined at each point
  $X \in \SymmetricMatrices{n}$ by
  \[
    \hat{g}(X) \DefinedEqual \hat{v} \hat{v}\Transpose,
    \qquad
    \hat{v} \DefinedEqual \MaxEigenVector(X, \delta).
  \]
  Then, $\hat{g}$ is a $\delta$-relatively inexact stochastic oracle for~$f$.
\end{lemma}

\begin{proof}
  Let $X, Y \in \SymmetricMatrices{n}$.
  Since $\hat{v}$ is a unit vector (a.s.), we have (a.s.)
  \[
    f(Y)
    =
    \MaxEigenValue(Y)
    \geq
    \InnerProduct{Y \hat{v}}{\hat{v}}
    =
    \InnerProduct{X \hat{v}}{\hat{v}}
    +
    \InnerProduct{(Y - X) \hat{v}}{\hat{v}}
    =
    \InnerProduct{X \hat{v}}{\hat{v}} + \InnerProduct{\hat{g}(X)}{Y - X}.
  \]
  Taking now expectations and using the fact that $\hat{v}$ is a
  $\delta$-relatively inexact eigenvector of~$X$, we get
  \[
    f(Y)
    =
    \Expectation f(Y)
    \geq
    \Expectation \InnerProduct{X \hat{v}}{\hat{v}}
    +
    \InnerProduct{\Expectation \hat{g}(X)}{Y - X}
    \geq
    (1 - \delta) f(X) + \InnerProduct{\Expectation \hat{g}(X)}{Y - X}.
    \qedhere
  \]
\end{proof}

Sometimes, we need to compute the oracle for the composition
of $\MaxEigenValue$ with an affine mapping:
\[
  f(x) = \MaxEigenValue(A x + C), \qquad x \in \RealField^d
\]
where $\Map{A}{\RealField^d}{\SymmetricMatrices{n}}$ is the linear operator
\[
  A x \DefinedEqual \sum_{i = 1}^d x_i A_i,
\]
and $A_1, \ldots, A_d, C \in \SymmetricMatrices{n}$.
In this case, our oracle is
\[
  \hat{g}(X) = A\Adjoint \hat{G}(A x + C),
\]
where $\hat{G}$ is the standard oracle for $\MaxEigenValue$:
\[
  \hat{G}(Y) = \hat{u} \hat{u}\Transpose,
  \qquad
  \hat{u} \DefinedEqual \MaxEigenVector(Y, \delta),
\]
and $\Map{A\Adjoint}{\SymmetricMatrices{n}}{\RealField^d}$ is the adjoint
operator
\[
  A\Adjoint G = (\InnerProduct{A_i}{G})_{i = 1}^d.
\]
In this case, we can evaluate $\hat{g}(x)$ without forming any intermediate
matrices:
\[
  \hat{g}(x) = (\InnerProduct{A_i \hat{u}}{\hat{u}})_{i = 1}^d,
  \qquad
  \hat{u} \DefinedEqual \MaxEigenVector(A x + C, \delta).
\]
The complexity of this operation is just extra $d$ matrix-vector multiplications
of $A_i$ by $\hat{u}$.

\subsection{Maximal Singular Value}

\begin{lemma}
  Consider the function $\Map{f}{\RealMatrices{m}{n}}{\RealField}$
  defined by
  \[
    f(X) \DefinedEqual \MaxSingularValue(X).
  \]
  Let $\delta \in \OpenOpenInterval{0}{1}$.
  Consider the oracle $\hat{g}$ defined at each point
  $X \in \RealMatrices{m}{n}$ by
  \[
    \hat{g}(X) \DefinedEqual \hat{u} \hat{v}\Transpose,
    \qquad
    (\hat{u}, \hat{v}) \DefinedEqual \MaxSingularVectorPair(X, \delta).
  \]
  Then, $\hat{g}$ is a $\delta$-relatively inexact stochastic oracle for~$f$.
\end{lemma}

\begin{proof}
  Let $X, Y \in \RealMatrices{m}{n}$.
  Since $\hat{u}$ and $\hat{v}$ are unit vectors (a.s.), we have (a.s.)
  \[
    f(Y)
    =
    \MaxSingularValue(Y)
    \geq
    \InnerProduct{Y \hat{v}}{\hat{u}}
    =
    \InnerProduct{X \hat{v}}{\hat{u}}
    +
    \InnerProduct{(Y - X) \hat{v}}{\hat{u}}
    =
    \InnerProduct{X \hat{v}}{\hat{u}} + \InnerProduct{\hat{g}(X)}{Y - X}.
  \]
  Taking now expectations and using the fact that $(\hat{u}, \hat{v})$ is a
  $\delta$-relatively inexact pair of singular vectors of~$X$, we get
  \[
    f(Y)
    =
    \Expectation f(Y)
    \geq
    \Expectation \InnerProduct{X \hat{v}}{\hat{u}}
    +
    \InnerProduct{\Expectation \hat{g}(X)}{Y - X}
    \geq
    (1 - \delta) f(X) + \InnerProduct{\Expectation \hat{g}(X)}{Y - X}.
    \qedhere
  \]
\end{proof}
\subsection{Squared Spectral Norm}
\label{sec:SquaredSpectralNorm}

\begin{lemma}
  \label{th:OracleForSquaredSpectralNorm}
  Consider the function $\Map{f}{\RealMatrices{m}{n}}{\RealField}$
  defined by
  \[
    f(X)
    \DefinedEqual
    \SchattenNorm{X}{\infty}^2
    =
    [\MaxSingularValue(X)]^2.
  \]
  Let $\delta \in \OpenOpenInterval{0}{1}$.
  Consider the oracle $\hat{g}$ defined at each
  point $X \in \RealMatrices{m}{n}$ by
  \[
    \hat{g}(X) \DefinedEqual 2 \hat{u} \hat{u}\Transpose X,
    \qquad
    \hat{u} \DefinedEqual \MaxEigenVector(X X\Transpose, \delta).
  \]
  Then, $\hat{g}$ is a $\delta$-relatively inexact stochastic oracle for~$f$.
\end{lemma}

\begin{proof}
  Let $X, Y \in \RealMatrices{n}{m}$, and let $\hat{u}$ be the random
  vector from the definition of~$\hat{g}(X)$.
  Since $\hat{u}$ is the output of a relatively inexact maximal eigenvector
  oracle~$\MaxEigenVector$, we have $\Norm{\hat{u}} = 1$ (a.s.).
  Consequently (a.s.),
  \[
    f(Y)
    =
    [\MaxSingularValue(Y)]^2
    =
    \MaxEigenValue(Y Y\Transpose)
    \geq
    \InnerProduct{Y Y\Transpose \hat{u}}{\hat{u}}
    =
    \InnerProduct{X X\Transpose \hat{u}}{\hat{u}}
    +
    \InnerProduct{(Y Y\Transpose - X X\Transpose) \hat{u}}{\hat{u}}.
  \]
  Note that
  \begin{align*}
    Y Y\Transpose - X X\Transpose
    &=
    (Y - X) Y\Transpose + X (Y - X)\Transpose
    \\
    &=
    (Y - X) X\Transpose + X (Y - X)\Transpose + (Y - X) (Y - X)\Transpose.
  \end{align*}
  Hence (a.s.),
  \begin{align*}
    f(Y)
    &\geq
    \InnerProduct{X X\Transpose \hat{u}}{\hat{u}}
    +
    2 \InnerProduct{(Y - X) X\Transpose \hat{u}}{\hat{u}}
    +
    \Norm{(Y - X)\Transpose \hat{u}}^2
    \\
    &\geq
    \InnerProduct{X X\Transpose \hat{u}}{\hat{u}}
    +
    \InnerProduct{\hat{g}(X)}{Y - X}.
  \end{align*}
  Taking now the expectation on both sides and using the fact that
  $\hat{u}$ is a $\delta$-relatively inexact stochastic eigenvector
  of~$X X\Transpose$, we obtain
  \begin{align*}
    f(Y)
    &=
    \Expectation f(Y)
    \geq
    (1 - \delta) \MaxEigenValue(X X\Transpose)
    +
    \InnerProduct{\Expectation \hat{g}(X)}{Y - X}
    \\
    &=
    (1 - \delta) f(X) + \InnerProduct{\Expectation \hat{g}(X)}{Y - X}.
    \qedhere
  \end{align*}
\end{proof}

Note that the above oracle is relatively bounded w.r.t.\ the function.

\begin{lemma}
  \label{th:OracleForSquaredSpectralNormIsRelativeBounded}
  The oracle~$\hat{g}$ from \cref{th:OracleForSquaredSpectralNorm}
  is $2$-relatively bounded (in the standard Frobenius norm) w.r.t.\ the
  function~$\SchattenNorm{\cdot}{\infty}^2$.
\end{lemma}

\begin{proof}
  Let $X \in \RealMatrices{m}{n}$, and let $\hat{u}$ be the random vector
  from the definition of~$\hat{g}(X)$.
  Since $\Norm{\hat{u}} = 1$ (a.s.), we have (a.s.)
  \begin{align*}
    \FrobeniusNorm{\hat{g}(X)}^2
    &=
    4 \InnerProduct{\hat{u} \hat{u}\Transpose X}{\hat{u} \hat{u}\Transpose X}
    =
    4 \InnerProduct{X X\Transpose \hat{u}}{\hat{u}}
    \\
    &\leq
    4 \MaxEigenValue(X X\Transpose)
    =
    4 [\MaxSingularValue(X)]^2
    =
    4 f(X).
  \end{align*}
  Consequently, $\Expectation[\FrobeniusNorm{\hat{g}(X)}^2] \leq 4 f(X)$.
\end{proof}
  \section{Spectral Linear Regression}
\label{sec:SpectralLinearRegression}
\UsingNamespace{SpectralLinearRegression}

Consider the problem of linear approximation of a given
matrix~$C \in \RealMatrices{n}{m}$ by a given collection of
matrices $A_1, \ldots, A_d \in \RealMatrices{n}{m}$
w.r.t.\ the matrix infinity norm:
\begin{equation}
  \LocalLabel{eq:Problem}
  f^*
  \DefinedEqual
  \min_{x \in \RealField^d} f(x),
  \qquad
  f(x) \DefinedEqual \SchattenNorm[\Big]{\sum_{i = 1}^d x_i A_i - C}{\infty}.
\end{equation}
Note that problem~\eqref{\LocalName{eq:Problem}} is very similar to a classical
linear regression problem.
The only difference is that we measure the residual between matrices
in the \emph{spectral} norm instead of the Frobenius one.
In view of this analogy, we refer to problem~\eqref{\LocalName{eq:Problem}} as
a \emph{spectral linear regression} problem.

In what follows, without loss of generality, we assume that $n \leq m$
(otherwise, we can simply transpose all matrices).

We are going to find an approximate solution to
problem~\eqref{\LocalName{eq:Problem}} in \emph{relative scale}.
For this, however, it will be convenient to first transform this problem
into an equivalent one by squaring the objective function:
\begin{equation}
  \LocalLabel{eq:SquaredProblem}
  (f^*)^2
  =
  \min_{x \in \RealField^d} f^2(x),
  \qquad
  f^2(x) = \SchattenNorm{A x - C}{\infty}^2,
\end{equation}
where $\Map{A}{\RealField^d}{\RealMatrices{n}{m}}$ is the linear operator
\begin{equation}
  \LocalLabel{eq:LinearOperator}
  A x \DefinedEqual \sum_{i = 1}^d x_i A_i,
  \qquad
  x \in \RealField^d.
\end{equation}

Let us show that problem~\eqref{\LocalName{eq:SquaredProblem}} fits
the setting from \cref{sec:CompositionWithAffineMapping}.

First, note that \cref{CompositionWithAffineMapping::as:ClosednessOfSetImage}
is satisfied as problem~\eqref{\LocalName{eq:SquaredProblem}} is unconstrained.

Further, let us equip the space~$\RealMatrices{n}{m}$ with the standard
Frobenius norm:
\[
  \Norm{X} \DefinedEqual \FrobeniusNorm{X},
  \qquad
  X \in \RealMatrices{n}{m}.
\]
In the notation of \cref{sec:CompositionWithAffineMapping},
this is the Euclidean seminorm~$\RelativeNorm{\cdot}{B_1}$ with
$B_1 = I$ (identity operator in $\RealMatrices{n}{m}$).

Clearly, we have
\[
  f^2(x) = F(A x - C),
  \qquad
  \forall x \in \RealField^d,
\]
where $\Map{F}{\RealMatrices{n}{m}}{\RealField}$ is the squared spectral norm:
\[
  F(Y) \DefinedEqual \SchattenNorm{Y}{\infty}^2.
\]
Note that, for any $X \in \RealMatrices{n}{m}$, we have
$F(Y) = [\MaxSingularValue(X)]^2 \geq \frac{1}{n} \FrobeniusNorm{X}^2$
(recall that $n \leq m$).
Thus, the function~$F$ is consistent with the norm~$\Norm{\cdot}$ with the
following parameters:
\begin{equation}
  \LocalLabel{eq:ConsistencyParameters}
  \gamma_0 \DefinedEqual \frac{1}{n},
  \qquad
  \hat{Y}_0 \DefinedEqual 0.
\end{equation}
From \cref{CompositionWithAffineMapping::th:ConsistencyConditionsArePreserved},
it follows that, w.r.t.\ the seminorm~$\RelativeNorm{\cdot}{B}$ induced by the
Gram matrix
\begin{equation}
  \label{eq:GramMatrix}
  B = A\Adjoint A = (\InnerProduct{A_i}{A_j})_{i, j = 1}^d,
\end{equation}
the function~$f$ is also consistent with parameters~$\gamma_0$ and
\[
  \hat{x}_0 = T(0, -A\Adjoint C),
\]
where $T(\cdot, \cdot)$ is the gradient step operation:
\[
  T(\bar{x}, g)
  \DefinedEqual
  \argmin_{x \in \RealField^d} \Bigl\{
    \InnerProduct{g}{x} + \frac{1}{2} \RelativeNorm{x - \bar{x}}{B}^2
  \Bigr\},
  \qquad
  \bar{x} \in \RealField^d, \
  g \in (\ker B)\OrthogonalComplement.
\]
Note that the point~$T \DefinedEqual T(\bar{x}, g)$ can be computed by solving
the following linear system (which is guaranteed to be solvable):
\[
  B (T - \bar{x}) = -g.
\]

It remains to specify an (efficiently computable) relatively inexact stochastic
oracle for the function~$F$.
According to our discussion in
\cref{sec:CompositionWithAffineMapping,sec:SquaredSpectralNorm},
a good choice is the oracle $\hat{g}(x) = A\Adjoint \hat{G}(A x - C)$,
where $\hat{G}$ is the standard $(\Delta / 2)$-relatively inexact oracle for the
squared spectral norm from \cref{th:OracleForSquaredSpectralNorm}
(induced by our choice of a relatively inexact stochastic maximal eigenvector
oracle~$\MaxEigenVector$), and $\Delta \in \OpenOpenInterval{0}{1}$
is a fixed constant (to be specified later).

Recall from \cref{th:OracleForSquaredSpectralNormIsRelativeBounded} that
the oracle~$\hat{G}$ is $2$-relatively bounded (in the standard Frobenius
norm) w.r.t.\ the function~$F$.
Therefore, according to
\cref{CompositionWithAffineMapping::th:ConsistencyConditionsArePreserved},
the oracle~$\hat{g}$ is also $2$-relatively bounded but w.r.t.\
the function~$f$ and in the seminorm~$\RelativeNorm{\cdot}{B}$ induced by the
Gram matrix~\eqref{eq:GramMatrix}.

Applying now \cref{GradientMethodWithRelativelyInexactStochasticOracle::alg:Algorithm}
with the oracle~$\hat{g}$, initial point~$x_0 = \hat{x}_0$
constant~$L = 2$, accuracy~$\delta' = \Delta / 2$
and step sizes
\begin{equation}
  \LocalLabel{eq:StepSizes}
  a_k = \frac{L}{2 \delta'}, \qquad k \geq 0,
\end{equation}
we conclude, from
\cref{GradientMethodWithRelativelyInexactStochasticOracle::th:ConvergenceRateForSimplerChoiceOfStepSizes},
that, once the number of iterations~$N$ performed by the algorithm becomes
sufficiently large, namely,
\begin{equation}
  \LocalLabel{eq:PreliminaryNumberOfIterations}
  N
  \geq
  \frac{2 L}{\gamma_0 (\delta')^2}
  =
  \frac{16}{\gamma_0 \Delta^2}
  =
  \frac{16 n}{\Delta^2}
\end{equation}
(see \cref{\LocalName{eq:ConsistencyParameters}}),
the point~$x_N \in \RealField^d$ constructed by the algorithm is guaranteed
to be a $\Delta$-relatively inexact solution to
problem~\eqref{\LocalName{eq:SquaredProblem}}:
\begin{equation}
  \LocalLabel{eq:GuaranteeForSquaredProblem}
  (1 - \Delta) \Expectation [f^2(x_N)] \leq (f^*)^2.
\end{equation}

Recall, however, that our initial problem was~\eqref{\LocalName{eq:Problem}},
not~\eqref{\LocalName{eq:SquaredProblem}}.
Let us therefore see what guarantees we have for the point~$x_N$
in terms of our initial problem.
Using Jensen's inequality in \cref{\LocalName{eq:GuaranteeForSquaredProblem}},
we get
\[
  \sqrt{1 - \Delta} \,
  \Expectation f(x_N)
  \leq
  \sqrt{(1 - \Delta) \Expectation [f^2(x_N)]}
  \leq
  f^*.
\]
Hence, for any given $\delta \in \OpenOpenInterval{0}{1}$, choosing
\begin{equation}
  \LocalLabel{eq:RelativeAccuracyForSquaredProblem}
  \Delta
  \DefinedEqual
  1 - (1 - \delta)^2
  =
  (2 - \delta) \delta
  \quad
  (\in \OpenOpenInterval{0}{1}),
\end{equation}
we can guarantee that the point~$x_N$ is a $\delta$-relatively inexact solution
to our original problem~\eqref{\LocalName{eq:Problem}},
\[
  (1 - \delta) \Expectation f(x_N) \leq f^*,
\]
for any $N \geq N(\delta)$, where
\begin{equation}
  \label{eq:TheoreticalNumberOfIterations}
  N(\delta)
  \DefinedEqual
  \frac{16 n}{[(2 - \delta) \delta]^2}
  \leq
  \frac{16 n}{\delta^2}
\end{equation}
(see \cref{\LocalName{eq:PreliminaryNumberOfIterations}}).
  \section{Numerical Experiments}
\label{sec:NumericalExperiments}

Let us present preliminary computational results for
our algorithms as applied for solving the spectral linear regression
problem~\eqref{SpectralLinearRegression::eq:Problem}
using the setup from \cref{sec:SpectralLinearRegression}.
We set the target relative accuracy to one percent:
\begin{equation}
  \label{eq:TargetAccuracy}
  \delta \DefinedEqual 0.01,
\end{equation}
which is a typical choice in most engineering applications.

To be able to assess the performance of our optimization methods, we generate
data for problem~\eqref{SpectralLinearRegression::eq:Problem} in a special way.
Specifically, we choose the matrix~$C \in \RealMatrices{n}{m}$ to be
diagonal such that its largest element (in absolute value) is fixed and
is located in the top left corner:
\begin{gather}
  \label{eq:RequirementOnTargetMatrix}
  C = \DiagonalMatrix(1, c_2, \ldots, c_n),
  \qquad
  \Abs{c_i} \leq 1, \quad 2 \leq i \leq n,
\end{gather}
while the matrices $A_1, \ldots, A_d \in \RealMatrices{n}{m}$ are
constructed in such a way so that each of them has zero in the top left corner:
\begin{equation}
  \label{eq:RequiremenetOnBaseMatrices}
  A_i\UpperIndex{1, 1} = 0, \quad 1 \leq i \leq d.
\end{equation}
This way of generating data ensures that the optimal value for our
problem is known (see \cref{th:OptimalValueInExperiments}):
\[
  f^* = 1.
\]
The other diagonal elements $c_2, \ldots, c_n$ of~$C$ and all nonzero elements
of matrices $A_1, \ldots, A_d$ are generated randomly from the
standard uniform distribution on the interval~$\ClosedClosedInterval{-1}{1}$.

The data for our experiments in generated to be \emph{sparse}.
Specifically, each of the matrices~$A_1, \dots, A_d$ contains
only $s \DefinedEqual 5$ nonzero elements in each column.
The $s$ row indices of nonzero elements in each column $1 \leq j \leq m$
are randomly selected (without repetition) from the uniform distribution
on the set $\Set{1, \dots, n}$ if $j > 1$ and $\Set{2, \dots, n}$ if $j = 1$
(so that constraint~\eqref{eq:RequiremenetOnBaseMatrices} is respected).

The specific values of parameters~$d$, $n$ and~$m$, that we consider in the
experiments, are shown in \cref{tab:ParametersForExperiments},
together with the corresponding theoretical number of iterations~$N(\delta)$
that was computed according to \cref{eq:TheoreticalNumberOfIterations}.

\begin{table}
  \centering
  \begin{tabular}[t]{cccc}
    \toprule
    $d$ & $n$ & $m$ & $N(\delta)$ \\
    \midrule
    400 & 100 & 200 & 4\,040\,303 \\
    800 & 200 & 400 & 8\,080\,605 \\
    2\,000 & 500 & 1\,000 & 20\,201\,511 \\
    4\,000 & 1\,000 & 2\,000 & 40\,403\,021 \\
    \bottomrule
  \end{tabular}
  \caption{Problem instances used in our experiments.}
  \label{tab:ParametersForExperiments}
\end{table}

In what follows, we present the results in form of convergence plots for our
methods.
Each curve on such a plot displays the dependence of the relative
accuracy~$\delta_k \in \OpenOpenInterval{0}{1}$ of the current
approximate solution~$x_k$ constructed by the method against
the current iteration number~$k$ (or the total computational time taken by the
method up to iteration~$k$).
The accuracy~$\delta_k$ is defined as the smallest number such that
$(1 - \delta_k) f(x_k) \leq f^*$, i.e.,
\begin{equation}
  \label{eq:RelativeAccuracy}
  \delta_k = 1 - f(x_k) / f^*.
\end{equation}
Note from \cref{SpectralLinearRegression::eq:Problem} that
we cannot compute~$f(x_k)$ exactly as it requires computing the largest
singular value of the (potentially big) matrix~$X_k \DefinedEqual A x_k - C$.
Therefore, in practice, we actually approximate it by running the standard
Power Method for a sufficiently large number of iterations
(until the eigenvalue approximation stabilizes) to compute the
largest eigenvalue of the matrix~$X_k X_k\Transpose$ and then take the square
root.
Such an approximation is quite efficient and is sufficiently accurate for any
practical purposes.

The code for our experiments is written in C++ and uses the Eigen~3
library~\cite{EigenLibrary} for matrix computations.
It is compiled and run on a laptop with the Intel Core i7-8650U CPU,
16~GiB RAM, and Ubuntu~22.04 OS using the Clang~14 compiler.
For performing linear algebra operations, the Eigen library is allowed to use
all $8$ available threads.
\subsection{Gradient Method vs Dual Averaging}
\label{sec:ComparisonBetweenMethods}

\begin{figure}
  \newcommand{\PlotsPath}{NumericalExperiments/ComparisonBetweenMethods/plots}
  \centering

  \begin{subfigure}[b]{0.3\linewidth}
    \centering
    \includegraphics[width=\textwidth]{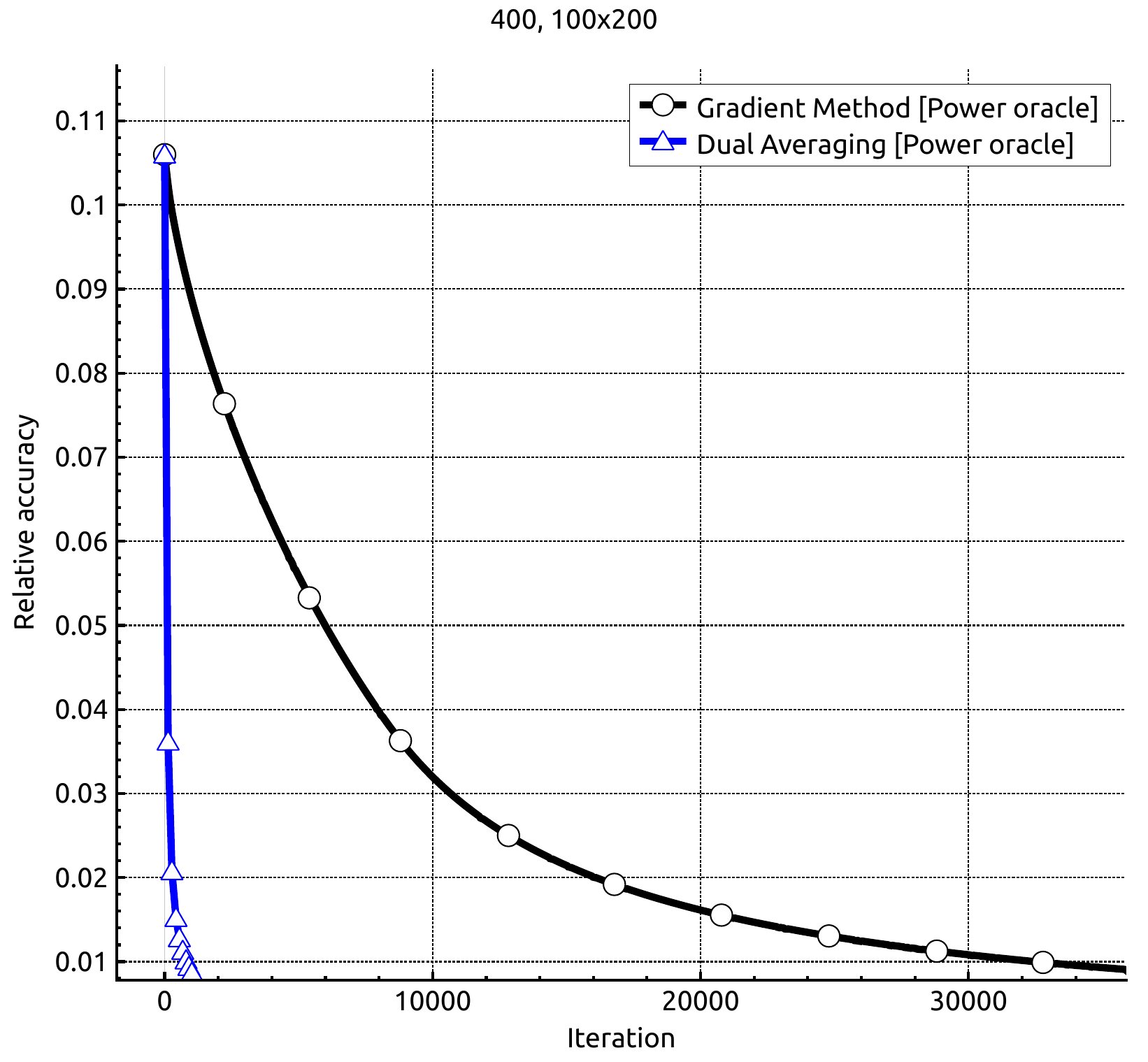}
  \end{subfigure}
  \begin{subfigure}[b]{0.3\linewidth}
    \centering
    \includegraphics[width=\textwidth]{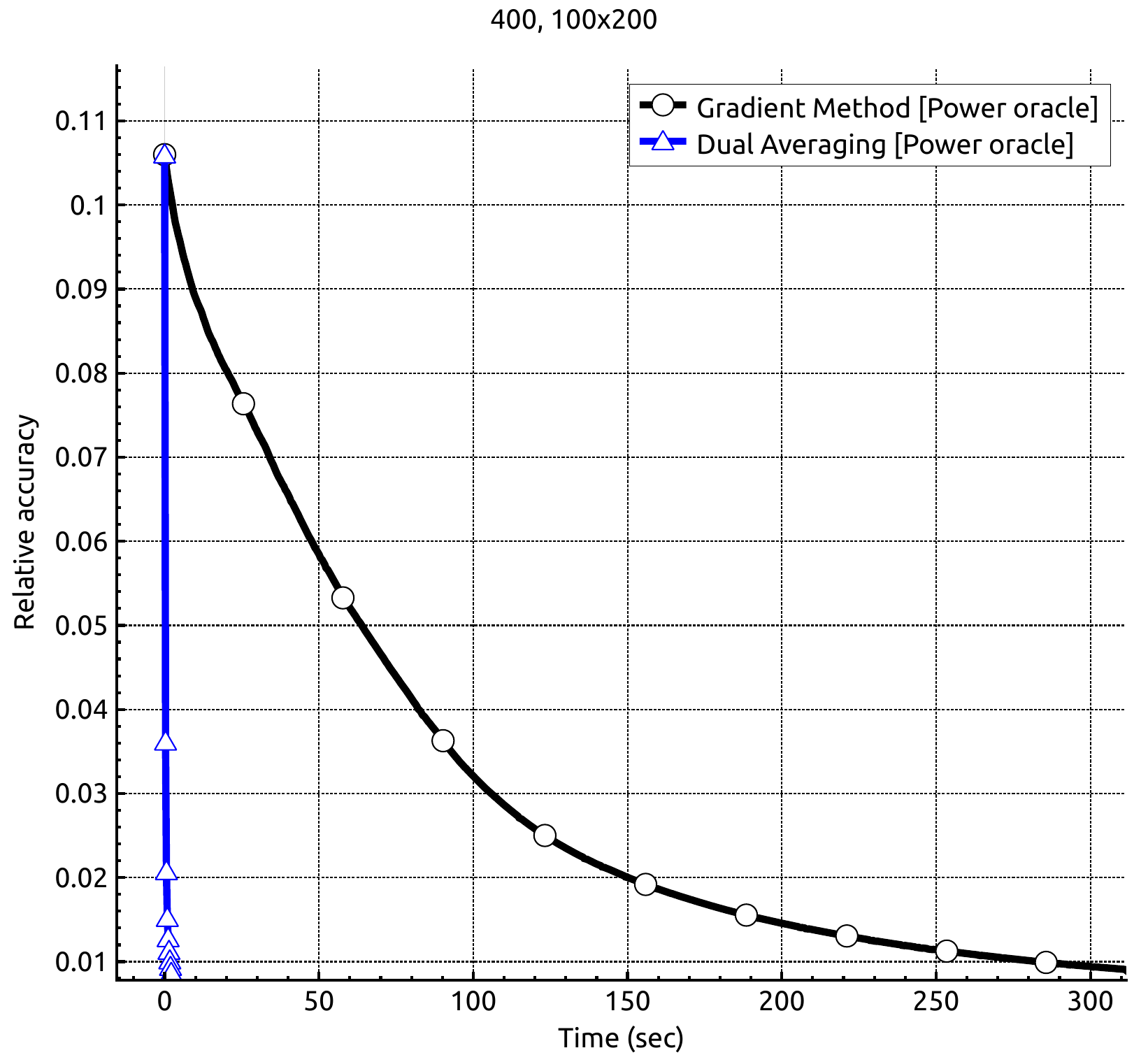}
  \end{subfigure}

  \begin{subfigure}[b]{0.3\linewidth}
    \centering
    \includegraphics[width=\textwidth]{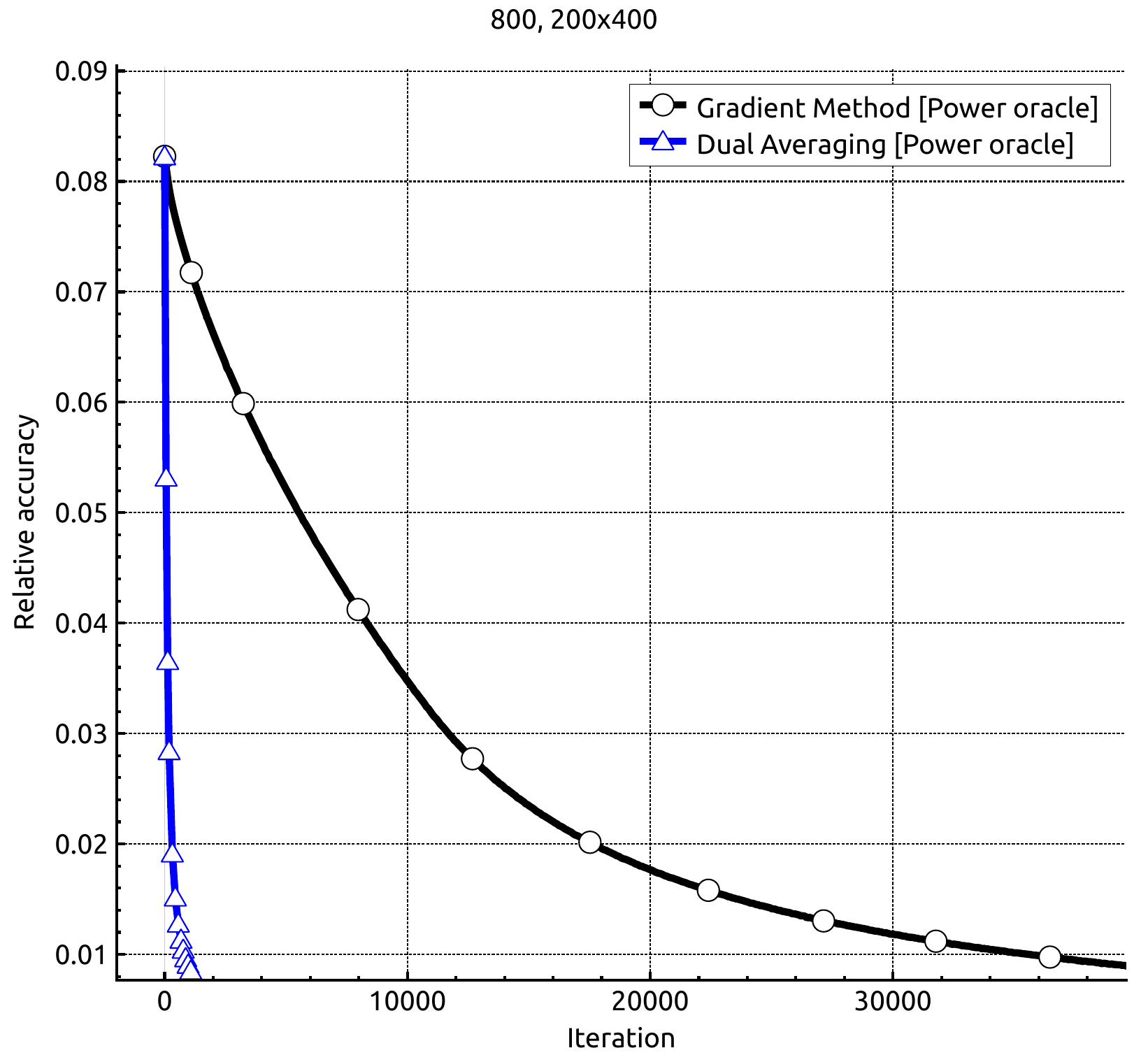}
  \end{subfigure}
  \begin{subfigure}[b]{0.3\linewidth}
    \centering
    \includegraphics[width=\textwidth]{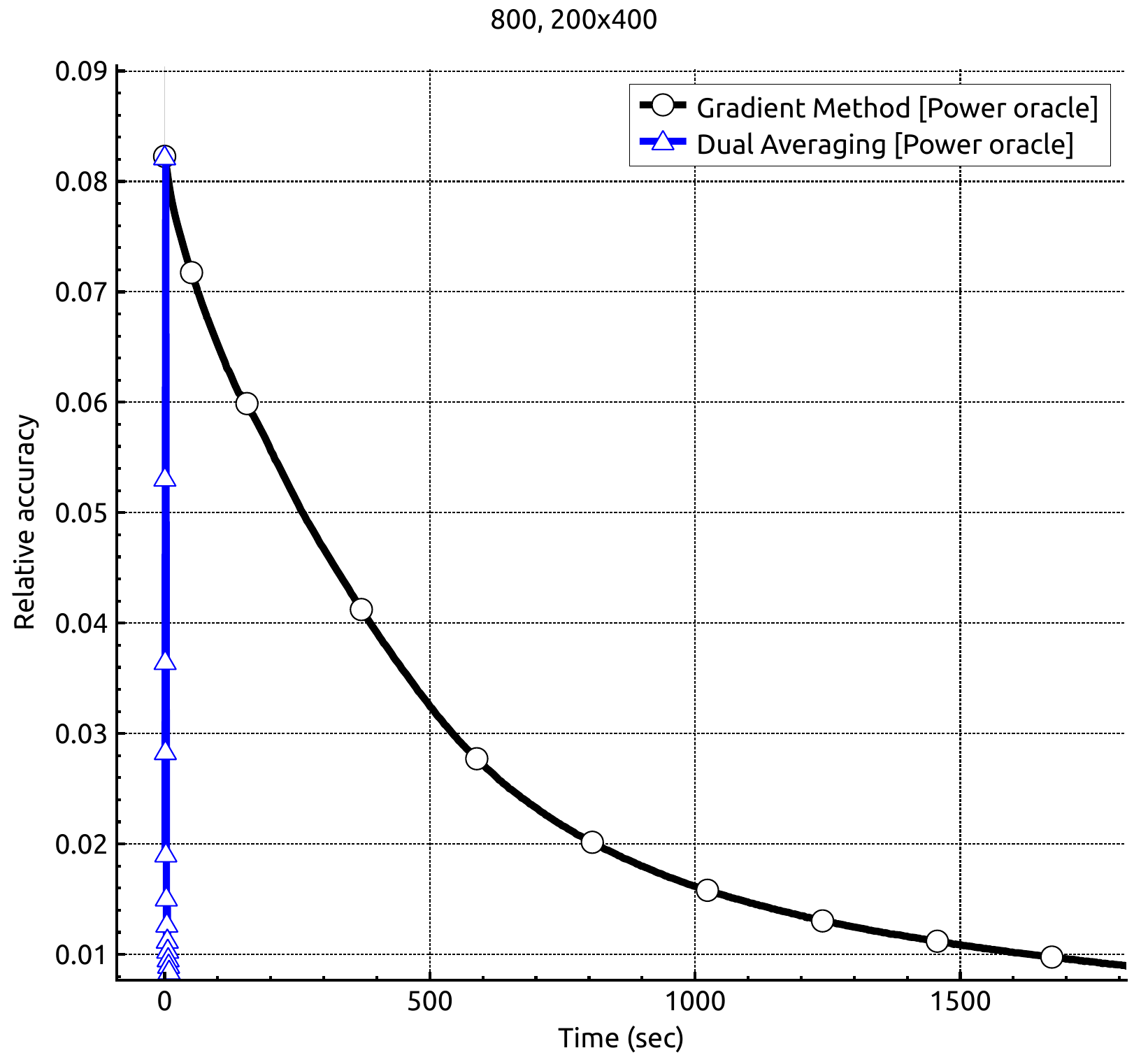}
  \end{subfigure}

  \caption{%
    Comparison between different methods for the first two problem instances
    from \cref{tab:ParametersForExperiments}.
  }
  \label{fig:ComparisonBetweenMethods-Results}
\end{figure}

In the first experiment, we compare the Gradient Method
(\cref{GradientMethodWithRelativelyInexactStochasticOracle::alg:Algorithm}
with constant step sizes~\eqref{SpectralLinearRegression::eq:StepSizes})
against the Dual Averaging method (\cref{alg:DualAveraging} with parameters
given by \cref{%
  eq:ChoiceOfOracleInaccuracy,%
  eq:ChoiceOfScalingCoefficients,%
  eq:ChoiceOfProxCoefficients%
}).
Both methods use the same oracle based on the Power algorithm for computing the
maximal eigenvector.

The results are shown in \cref{fig:ComparisonBetweenMethods-Results}
for two different instances of our problem.
The left column displays the convergence in terms of iteration numbers,
while the right columns displays the convergence in terms of the running time.

As we can see, there is a huge difference between the two methods:
the Gradient Method with fixed step sizes is significantly
slower than the Dual Averaging method with dynamically chosen parameters,
both in terms of the iteration number and, especially, the running time
(where the difference reaches \emph{several orders of magnitude!}).

Nevertheless, both methods have successfully reached the required target
accuracy~\eqref{eq:TargetAccuracy} in all cases.
What is very interesting, however, is that the actual number of iterations
it took for them to do that was much smaller than was predicted by the
worst-case theoretical estimate~$N(\delta)$ (see
\cref{tab:ParametersForExperiments}): by \emph{two orders of magnitude}
for the Gradient Method, and by approximately \emph{three to four orders of
magnitude} for Dual Averaging.
It is, of course, an interesting open question for future research---to
investigate why there is such a huge difference between theory and practice.
\subsection{Power Oracle vs Lanczos Oracle}

As we have seen in \cref{sec:ComparisonBetweenMethods}, the Gradient Method
with fixed step sizes is completely impractical.
Let us therefore consider only the Dual Averaging method now but look at the
difference between two different oracles: the Power oracle and the Lanczos
oracle.

The results are shown in \cref{fig:ComparisonBetweenOracles-Results},
where we now consider much larger problem instances than before.
As in \cref{fig:ComparisonBetweenMethods-Results}, the left column displays
the convergence in terms of iteration numbers, and the right column---in terms
of the running time.

As expected, the method with the Lanczos oracle is faster than the other one:
the gap between the two grows with the iteration counter (or the achieved
relative accuracy level) and, in the end, reaches, in our particular case,
approximately $4$ times, both in terms of the iteration number and the running
time.
Although the difference is not as dramatic as between the two different
methods from \cref{sec:ComparisonBetweenMethods}, it is still quite significant,
especially for large-scale problems that require hundreds and thousands
of seconds of computations.

  \newpage
  \appendix

  \section{Auxiliary Results}

\begin{lemma}
  \label{th:AuxiliaryClosednessResult}
  Let $Q \subseteq \VectorSpace{E}$ be a set,
  and let $\Map{A}{\VectorSpace{E}}{\VectorSpace{E}_1}$
  and $\Map{C}{\VectorSpace{E}_1}{\VectorSpace{E}_2}$ be linear transformations.
  Then, the following implication%
  \footnote{
    Hereinafter,
    $A(Q) \DefinedEqual \SetBuilder{A x}{x \in Q}$
    is the image of the set~$Q$ under the linear transformation~$A$.
  }
  holds:
  \[
    A(Q) + \ker C \ \text{is closed}
    \implies
    Q + \ker(C A) \ \text{is closed}.
  \]
\end{lemma}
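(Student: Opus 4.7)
The plan is to reduce the closedness of $Q + \ker(C A)$ in $\VectorSpace{E}$ to the closedness of its image $A(Q) + \ker C$ in $\VectorSpace{E}_1$, by expressing the former as the preimage of the latter under the continuous linear map $A$. Preimages of closed sets under continuous maps are closed, so this gives the desired implication immediately.

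The key identity to establish is
\[
  Q + \ker(C A)
  =
  A^{-1}\bigl( A(Q) + \ker C \bigr).
\]
First I would prove the inclusion $\subseteq$: if $x = q + h$ with $q \in Q$ and $h \in \ker(C A)$, then $A x = A q + A h$ with $A q \in A(Q)$ and $A h \in \ker C$ (by the very definition $\ker(C A) = A^{-1}(\ker C)$). For the reverse inclusion $\supseteq$: if $A x \in A(Q) + \ker C$, write $A x = A q + k$ for some $q \in Q$ and $k \in \ker C$; then $A(x - q) = k \in \ker C$, so $x - q \in \ker(C A)$, and hence $x \in Q + \ker(C A)$.

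Once this identity is in hand, the conclusion is immediate: $A$ is a continuous linear map between finite-dimensional spaces, and the preimage of a closed set under a continuous map is closed. Hence, assuming $A(Q) + \ker C$ is closed, so is $Q + \ker(C A)$. There is no serious obstacle here; the only thing to be careful about is verifying the second inclusion (the ``$\supseteq$'' direction), because a priori a point $x$ with $A x \in A(Q) + \ker C$ need not itself lie in $Q + \ker(C A)$ unless one exploits the freedom to translate $x$ by an element of $\ker(C A)$, which is exactly what the calculation above does.
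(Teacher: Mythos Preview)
Your proof is correct and follows essentially the same idea as the paper's: both hinge on the observation that $Ax \in A(Q) + \ker C$ implies $x \in Q + \ker(CA)$, combined with the continuity of the linear map~$A$. The paper unfolds this into a direct sequential argument (take a convergent sequence in $Q + \ker(CA)$, push it through~$A$, and pull the limit back), whereas you package the same computation as the preimage identity $Q + \ker(CA) = A^{-1}\bigl(A(Q) + \ker C\bigr)$ and invoke the standard fact that preimages of closed sets under continuous maps are closed; the mathematical content is identical.
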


\begin{proof}
  Let $A(Q) + \ker C$ be closed,
  and let $(z_k)_{k = 1}^\infty$ be a sequence in~$Q + \ker(C A)$
  converging to a point~$z \in \VectorSpace{E}$.
  Let us prove that $z \in Q + \ker(C A)$.
  Note that, for any $k \geq 1$, we have
  $A z_k \in A(Q) + A \ker(C A) \in A(Q) + \ker C$.
  Since $A$ is a continuous mapping
  (as a linear transformation between finite-dimensional vector spaces)
  and $z_k \to z$, it holds that $A z_k \to A z$.
  Furthermore, $A z \in A(Q) + \ker C$ since $A(Q) + \ker C$ is a closed set.
  Thus, $A z = A x + h$ for some $x \in Q$ and $h \in \ker C$.
  Consequently, $C A (z - x) = C h = 0$, which means that $z - x \in \ker(C A)$.
  But then $z = x + (z - x) \in Q + \ker(C A)$.
\end{proof}

\begin{lemma}
  \label{th:CriterionForClosednessOfProjection}
  Let $Q \subseteq \VectorSpace{E}$ be a set,
  $\VectorSpace{L} \subseteq \VectorSpace{E}$ be a linear subspace,
  $\VectorSpace{L}^c \subseteq \VectorSpace{E}$ be a complementary subspace
  to~$\VectorSpace{L}$, and let
  $\Map{P_{\VectorSpace{L}^c}}{\VectorSpace{E}}{\VectorSpace{L}^c}$
  be the projector%
  \footnote{%
    Specifically, if $x = x_{\VectorSpace{L}} + x_{\VectorSpace{L}^c}$ is the
    unique decomposition of~$x \in \VectorSpace{E}$ into the sum of elements
    from~$\VectorSpace{L}$ and~$\VectorSpace{L}^c$, respectively, then
    $P_{\VectorSpace{L}^c} x \DefinedEqual x_{\VectorSpace{L}^c}$.
  }
  of~$\VectorSpace{E}$ onto~$\VectorSpace{L}^c$
  corresponding to the decomposition
  $\VectorSpace{E} = \VectorSpace{L} \DirectSum \VectorSpace{L}^c$.
  Then,
  \[
    \text{$Q + \VectorSpace{L}$ is closed}
    \iff
    \text{$P_{\VectorSpace{L}^c}(Q)$ is closed}.
  \]
\end{lemma}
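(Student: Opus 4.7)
The plan is to reduce the equivalence to a single clean identity: $Q + \VectorSpace{L} = P_{\VectorSpace{L}^c}(Q) + \VectorSpace{L}$. This holds because every $x \in Q$ decomposes uniquely as $x = P_{\VectorSpace{L}}(x) + P_{\VectorSpace{L}^c}(x)$, so adding $\VectorSpace{L}$ to $Q$ ``kills'' the $\VectorSpace{L}$-component of each element. Once this identity is in place, it suffices to show, for any subset $A \subseteq \VectorSpace{L}^c$, that $A + \VectorSpace{L}$ is closed in $\VectorSpace{E}$ if and only if $A$ is closed (in $\VectorSpace{L}^c$, equivalently in $\VectorSpace{E}$).

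For the reduced statement, I would note that (for $A \subseteq \VectorSpace{L}^c$) we have the equality $A + \VectorSpace{L} = P_{\VectorSpace{L}^c}^{-1}(A)$: the inclusion $\subseteq$ follows from $P_{\VectorSpace{L}^c}(a + \ell) = a \in A$ for $a \in A$, $\ell \in \VectorSpace{L}$, and the reverse inclusion from writing $x = P_{\VectorSpace{L}^c}(x) + P_{\VectorSpace{L}}(x)$. Since $P_{\VectorSpace{L}^c}$ is a continuous linear map (everything is finite-dimensional), closedness of $A$ immediately implies closedness of its preimage $A + \VectorSpace{L}$, giving the $(\Leftarrow)$ direction.

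For the $(\Rightarrow)$ direction, I would recover $A$ from $A + \VectorSpace{L}$ by intersecting with $\VectorSpace{L}^c$: I claim $A = (A + \VectorSpace{L}) \cap \VectorSpace{L}^c$. The inclusion $\subseteq$ is immediate since $A \subseteq \VectorSpace{L}^c$. For $\supseteq$, if $x \in (A + \VectorSpace{L}) \cap \VectorSpace{L}^c$, write $x = a + \ell$ with $a \in A$, $\ell \in \VectorSpace{L}$; then $\ell = x - a \in \VectorSpace{L}^c$ as well (since both $x$ and $a$ lie in $\VectorSpace{L}^c$), so $\ell \in \VectorSpace{L} \cap \VectorSpace{L}^c = \Set{0}$ and $x = a \in A$. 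Since $\VectorSpace{L}^c$ is closed (as a finite-dimensional subspace), closedness of $A + \VectorSpace{L}$ then gives closedness of $A$.

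There is no real obstacle here: the whole proof rests on the two elementary set identities $Q + \VectorSpace{L} = P_{\VectorSpace{L}^c}(Q) + \VectorSpace{L}$ and $A + \VectorSpace{L} = P_{\VectorSpace{L}^c}^{-1}(A)$ for $A \subseteq \VectorSpace{L}^c$, together with the continuity of $P_{\VectorSpace{L}^c}$ and the closedness of $\VectorSpace{L}^c$. The only minor point to be careful about is that a projector along a complementary subspace is genuinely continuous in the finite-dimensional setting (so no topological subtleties arise), and that the decomposition $\VectorSpace{E} = \VectorSpace{L} \DirectSum \VectorSpace{L}^c$ is used both algebraically (to get the identities) and topologically (to get that $\VectorSpace{L}^c$ is closed).
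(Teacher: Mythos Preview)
Your proof is correct and takes a cleaner, more structural route than the paper's. The paper argues the $\Rightarrow$ direction by a direct sequence argument (take $u_k \to u$ in $P_{\VectorSpace{L}^c}(Q)$, observe $u_k \in Q + \VectorSpace{L}$, pass to the limit, then project back), and handles the $\Leftarrow$ direction by invoking its auxiliary \cref{th:AuxiliaryClosednessResult} with $A \DefinedEqual P_{\VectorSpace{L}^c}$ and $C \DefinedEqual I_{\VectorSpace{E}}$. Your argument instead isolates the two set identities $Q + \VectorSpace{L} = P_{\VectorSpace{L}^c}(Q) + \VectorSpace{L}$ and, for $A \subseteq \VectorSpace{L}^c$, $A + \VectorSpace{L} = P_{\VectorSpace{L}^c}^{-1}(A)$ and $A = (A + \VectorSpace{L}) \cap \VectorSpace{L}^c$; closedness then follows in one line in each direction from continuity of the projector and closedness of $\VectorSpace{L}^c$. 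The underlying content is the same (both rely on $\VectorSpace{L}^c$ being closed and $P_{\VectorSpace{L}^c}$ being continuous), but your packaging is self-contained and does not need the separate lemma, while the paper's version reuses a result it needs elsewhere anyway.
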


\begin{proof}
  Suppose $Q + \VectorSpace{L}$ is closed.
  Let $(u_k)_{k = 1}^\infty$ be an arbitrary sequence
  in~$P_{\VectorSpace{L}^c}(Q)$ converging to a point $u \in \VectorSpace{E}$.
  Let us prove that $u \in P_{\VectorSpace{L}^c}(Q)$.
  Clearly, $u \in \VectorSpace{L}^c$
  since $P_{\VectorSpace{L}^c}(Q) \subseteq \VectorSpace{L}^c$
  and~$\VectorSpace{L}^c$ is a closed set (as a linear subspace).
  On the other hand, since $u_k \in P_{\VectorSpace{L}^c}(Q)$
  for all $k \geq 1$, there exists a sequence~$(x_k)_{k = 1}^\infty$ in~$Q$
  such that $u_k = P_{\VectorSpace{L}^c} x_k$ for all $k \geq 1$.
  Then, $u_k = x_k - P_{\VectorSpace{L}} x_k \in Q + \VectorSpace{L}$
  for all $k \geq 1$.
  Since $Q + \VectorSpace{L}$ is a closed set and~$u_k \to u$,
  we have $u \in Q + \VectorSpace{L}$, i.e., $u = x - h$
  for some $x \in Q$ and $h \in \VectorSpace{L}$.
  Combining this with the fact that $u \in \VectorSpace{L}^c$, we conclude that
  $
    u
    =
    P_{\VectorSpace{L}^c} u
    =
    P_{\VectorSpace{L}^c} x
    \in
    P_{\VectorSpace{L}^c}(Q)
  $.
  This proves the ``$\Rightarrow$'' implication.

  The ``$\Leftarrow$'' implication follows from
  \cref{th:AuxiliaryClosednessResult}
  applied to $A \DefinedEqual P_{\VectorSpace{L}^c}$
  and $C \DefinedEqual I_{\VectorSpace{E}}$
  (the identity operator in~$\VectorSpace{E}$)
  as $\ker A = \VectorSpace{L}$ and $\ker C = \Set{0}$.
\end{proof}

\begin{lemma}
  \label{th:ConstancySpace}
  Let $\VectorSpace{L} \subseteq \VectorSpace{E}$ be a linear subspace,
  and let $\Map{f}{\VectorSpace{E}}{\RealField}$ be a convex function
  such that
  \[
    \Subdifferential f(x) \cap \VectorSpace{L}\OrthogonalComplement
    \neq
    \emptyset,
    \qquad
    \forall x \in \VectorSpace{E}.
  \]
  Then, $f$ is constant along~$\VectorSpace{L}$:
  \[
    f(x + h) = f(x),
    \qquad
    \forall x \in \VectorSpace{E}, \
    \forall h \in \VectorSpace{L}.
  \]
\end{lemma}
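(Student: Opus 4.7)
The plan is to use the subgradient inequality twice, symmetrically, to bound $f(x+h)$ both from below by $f(x)$ and from above by $f(x)$.

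First, I would fix arbitrary $x \in \VectorSpace{E}$ and $h \in \VectorSpace{L}$, and pick, by hypothesis, some $g \in \Subdifferential f(x) \cap \VectorSpace{L}\OrthogonalComplement$. The subgradient inequality gives
\[
  f(x + h) \geq f(x) + \DualPairing{g}{(x + h) - x} = f(x) + \DualPairing{g}{h},
\]
and since $h \in \VectorSpace{L}$ and $g \in \VectorSpace{L}\OrthogonalComplement$, the pairing $\DualPairing{g}{h}$ vanishes by the definition of the orthogonal complement (recalled in the footnote near \cref{eq:FinitenessOfRelativeDualNorm}). Hence $f(x + h) \geq f(x)$.

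Next, I would apply exactly the same reasoning at the shifted point $x + h$. By hypothesis, there exists $g' \in \Subdifferential f(x + h) \cap \VectorSpace{L}\OrthogonalComplement$, so
\[
  f(x) = f((x + h) + (-h)) \geq f(x + h) + \DualPairing{g'}{-h} = f(x + h),
\]
where we again use $-h \in \VectorSpace{L}$ to kill the inner product. Combining the two inequalities yields $f(x + h) = f(x)$, which is exactly what had to be shown.

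There is no real obstacle here: the only assumption we need is that $\VectorSpace{L}$ is a subspace (so that $-h \in \VectorSpace{L}$ whenever $h \in \VectorSpace{L}$), allowing the argument to be run in both directions; convexity of $f$ provides the subgradient inequality in each direction, and the existence hypothesis supplies a subgradient that annihilates $\VectorSpace{L}$ at every base point. The proof is essentially two lines, which matches the role of this lemma as a cited auxiliary fact inside \cref{StochasticGradientMethod::th:ProblemHasSolution}.
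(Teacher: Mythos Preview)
Your proposal is correct and is essentially identical to the paper's own proof: both apply the subgradient inequality at $x$ with a subgradient in $\VectorSpace{L}\OrthogonalComplement$ to get $f(x+h) \geq f(x)$, then apply it again at $x+h$ to get the reverse inequality. The only cosmetic difference is that you explicitly write $-h$ in the second application, whereas the paper writes the pairing with $h$ directly (which is harmless since it vanishes either way).
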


\begin{proof}
  Let $x \in \VectorSpace{E}$ and $h \in \VectorSpace{L}$.
  By our assumption, there is
  $f'(x) \in \Subdifferential f(x) \cap \VectorSpace{L}\OrthogonalComplement$.
  Hence,
  \[
    f(x + h) \geq f(x) + \DualPairing{f'(x)}{h} = f(x).
  \]
  Similarly, there exists
  $
    f'(x + h)
    \in
    \Subdifferential f(x + h) \cap \VectorSpace{L}\OrthogonalComplement
  $,
  and hence
  \[
    f(x) \geq f(x + h) + \DualPairing{f'(x + h)}{h} = f(x + h).
  \]
  Thus, $f(x + h) = f(x)$.
\end{proof}

\begin{lemma}
  \label{th:ExistenceOfMinimizerOnSubspace}
  \UsingNamespace{ExistenceOfMinimizerOnSubspace}
  Let $\Map{f}{\VectorSpace{E}}{\RealField}$ be a function,
  $Q \subseteq \VectorSpace{E}$ be a nonempty set,
  $\VectorSpace{L} \subseteq \VectorSpace{E}$ be a linear subspace,
  $\VectorSpace{L}^c$ be a complementary subspace to~$\VectorSpace{L}$,
  and let $\Map{P_{\VectorSpace{L}^c}}{\VectorSpace{E}}{\VectorSpace{L}^c}$
  be the projector of~$\VectorSpace{E}$ onto~$\VectorSpace{L}^c$
  corresponding to the decomposition
  $\VectorSpace{E} = \VectorSpace{L} \DirectSum \VectorSpace{L}^c$.
  Suppose that:
  \begin{EnumerateClaims}
    \item
      \LocalLabel{itm:FunctionIsConstantAlongSubspace}
      $f$ is constant along~$L$, i.e.,
      $f(x + h) = f(x)$
      for all $x \in \VectorSpace{E}$ and all $h \in \VectorSpace{L}$.
    \item
      \LocalLabel{itm:SumIsClosed}
      $Q + \VectorSpace{L}$ is a closed set.
    \item
      \LocalLabel{itm:FunctionIsClosed}
      $f$ is a closed function.
    \item
      \LocalLabel{itm:BoundedSublevelSets}
      $f$ restricted to~$P_{\VectorSpace{L}^c}(Q)$ has bounded sublevel sets%
      \footnote{
        This means that, for any $\alpha \in \RealField$, the set
        $\SetBuilder{u \in P_{\VectorSpace{L}^c}(Q)}{f(u) \leq \alpha}$
        is bounded.
      }.
  \end{EnumerateClaims}
  Then, $f$ has a minimizer on~$Q$.
\end{lemma}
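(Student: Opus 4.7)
The plan is to reduce the minimization over $Q$ to an equivalent minimization over the projected set $P_{\VectorSpace{L}^c}(Q)$, where all four hypotheses combine to give a standard Weierstrass-type argument.

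First I would establish the equivalence of the two problems. For any $x \in \VectorSpace{E}$, the decomposition $\VectorSpace{E} = \VectorSpace{L} \DirectSum \VectorSpace{L}^c$ yields $x - P_{\VectorSpace{L}^c} x = P_{\VectorSpace{L}} x \in \VectorSpace{L}$, so by hypothesis~(\LocalName{itm:FunctionIsConstantAlongSubspace}) we have $f(x) = f(P_{\VectorSpace{L}^c} x)$ for every $x \in \VectorSpace{E}$. In particular, $\inf_{x \in Q} f(x) = \inf_{u \in P_{\VectorSpace{L}^c}(Q)} f(u)$, and if $u^*$ attains the latter infimum, then any $x^* \in Q$ with $P_{\VectorSpace{L}^c} x^* = u^*$ (such $x^*$ exists by the definition of the projected set) attains the former.

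Next I would show that $P_{\VectorSpace{L}^c}(Q)$ is a closed set. This is exactly the conclusion of \cref{th:CriterionForClosednessOfProjection}, applied with this $\VectorSpace{L}$, $\VectorSpace{L}^c$ and $Q$, using hypothesis~(\LocalName{itm:SumIsClosed}) that $Q + \VectorSpace{L}$ is closed. Since $Q \neq \emptyset$, the projected set is also nonempty.

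Finally I would invoke the standard Weierstrass argument on the closed set $P_{\VectorSpace{L}^c}(Q)$. Fix any $u_0 \in P_{\VectorSpace{L}^c}(Q)$ and set $\alpha \DefinedEqual f(u_0) \in \RealField$. The sublevel set $S \DefinedEqual \SetBuilder{u \in P_{\VectorSpace{L}^c}(Q)}{f(u) \leq \alpha}$ is bounded by hypothesis~(\LocalName{itm:BoundedSublevelSets}), and it is closed because $f$ is closed (hypothesis~(\LocalName{itm:FunctionIsClosed})) and $P_{\VectorSpace{L}^c}(Q)$ is closed. Hence $S$ is compact and nonempty (it contains $u_0$), and the lower semicontinuous function $f$ attains its minimum on $S$, which coincides with its infimum on $P_{\VectorSpace{L}^c}(Q)$. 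Lifting this minimizer back to $Q$ via the first step completes the proof.

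There is no single hard step here; the only point requiring care is recognising that hypothesis~(\LocalName{itm:SumIsClosed}) is precisely what is needed to pass to a closed projected feasible set via \cref{th:CriterionForClosednessOfProjection}, and that hypothesis~(\LocalName{itm:FunctionIsConstantAlongSubspace}) is what lets us legally replace $Q$ by $P_{\VectorSpace{L}^c}(Q)$ without changing the infimum or losing a minimizer.
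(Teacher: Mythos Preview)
Your proposal is correct and follows essentially the same approach as the paper: reduce to minimizing over $P_{\VectorSpace{L}^c}(Q)$ via hypothesis~(i), use \cref{th:CriterionForClosednessOfProjection} with hypothesis~(ii) to get closedness of the projected set, and then apply the Weierstrass argument on the sublevel set through $u_0$ using hypotheses~(iii) and~(iv). The only cosmetic difference is that the paper writes out the chain of infima explicitly, whereas you invoke $f(x)=f(P_{\VectorSpace{L}^c}x)$ directly.
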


\begin{proof}
  \UsingMasterNamespace{ExistenceOfMinimizerOnSubspace}
  In view of
  assumption~\LocalClaimReference{\MasterName{itm:FunctionIsConstantAlongSubspace}},
  we can reduce the problem of minimizing~$f$ on~$Q$ to that of minimizing~$f$
  on~$P_{\VectorSpace{L}^c}(Q)$:
  \begin{align*}
    \inf_{x \in Q} f(x)
    &=
    \inf_{u \in \VectorSpace{L}^c, h \in \VectorSpace{L}}
    \SetBuilder{f(u + h)}{u + h \in Q}
    =
    \inf_{u \in \VectorSpace{L}^c, h \in \VectorSpace{L}}
    \SetBuilder{f(u)}{u + h \in Q}
    \\
    &=
    \inf_{u \in \VectorSpace{L}^c}
    \SetBuilder{f(u)}{u + h \in Q \ \text{for some $h \in \VectorSpace{L}$}}
    =
    \inf_{u \in P_{\VectorSpace{L}^c}(Q)} f(u).
  \end{align*}
  In particular, if $f$ has a minimizer~$u^*$ on $P_{\VectorSpace{L}^c}(Q)$,
  then $f$ also has a minimizer on~$Q$, which is given by any $x^* \in Q$
  such that $P_{\VectorSpace{L}^c} x^* = u^*$
  (at least one such~$x^*$ exists by the definition
  of~$P_{\VectorSpace{L}^c}(Q)$).

  It remains to prove that $f$ has a minimizer on~$P_{\VectorSpace{L}^c}(Q)$.
  According to assumption~\LocalClaimReference{\MasterName{itm:SumIsClosed}}
  and \cref{th:CriterionForClosednessOfProjection},
  the set $P_{\VectorSpace{L}^c}(Q)$ is closed.
  Moreover, it is nonempty since $Q$ is assumed to be nonempty.
  Let $u_0 \in P_{\VectorSpace{L}^c}(Q)$ be an arbitrary point.
  It suffices to show that $f$ has a minimizer on the set
  $
    L_0
    \DefinedEqual
    \SetBuilder{u \in P_{\VectorSpace{L}^c}(Q)}{f(u) \leq f(u_0)}.
  $
  Clearly, $L_0 \neq \emptyset$ (it contains~$u_0$).
  Furthermore, $L_0$ is bounded
  (by assumption~\LocalClaimReference{\MasterName{itm:BoundedSublevelSets}})
  and closed as the intersection of two closed sets:
  $P_{\VectorSpace{L}^c}(Q)$
  and $\SetBuilder{u \in \VectorSpace{E}}{f(u) \leq f(u_0)}$
  (whose closedness follows from
  assumption~\LocalClaimReference{\MasterName{itm:FunctionIsClosed}}).
  Thus, $L_0$ is a nonempty compact set and $f$ is a closed function.
  Hence, by the Weierstrass extreme value theorem, there indeed exists a
  minimizer of~$f$ on~$L_0$.
\end{proof}

\begin{lemma}
  \label{th:OptimalValueInExperiments}
  Problem~\eqref{SpectralLinearRegression::eq:Problem} with data
  satisfying requirements~\eqref{eq:RequirementOnTargetMatrix}
  and~\eqref{eq:RequiremenetOnBaseMatrices} has an optimal solution
  $x^* = 0$ and the following optimal value:
  \[
    f^* = f(0) = \SchattenNorm{C}{\infty} = 1.
  \]
\end{lemma}

\begin{proof}
  It suffices to show that $f$ has a zero subgradient at~$x^* = 0$.
  Note that, for each $x \in \RealField^d$, we have $f(x) = F(A x - C)$, where
  $\Map{F}{\RealMatrices{n}{m}}{\RealField}$ is the spectral norm function
  $
    F(X)
    =
    \SchattenNorm{X}{\infty}
    =
    \max_{u \in \UnitSphere{n - 1}, v \in \UnitSphere{m - 1}}
    \InnerProduct{X v}{u}
  $
  and $\Map{A}{\RealField^d}{\RealMatrices{n}{m}}$
  is the linear operator defined in
  \cref{SpectralLinearRegression::eq:LinearOperator}.
  By standard calculus rules for subgradients, we know that,
  for any $F'(-C) \in \Subdifferential F(-C)$, we have
  $A\Adjoint F'(-C) \in \Subdifferential f(0)$,
  and, for any $X \in \RealMatrices{n}{m}$, we have
  $F'(X) \DefinedEqual u(X) [v(X)]\Transpose \in \Subdifferential F(X)$,
  where $u(X) \in \UnitSphere{n - 1}$ and $v(X) \in \UnitSphere{m - 1}$
  are such that $\InnerProduct{X v(X)}{u(X)} = F(X)$.
  According to \cref{eq:RequirementOnTargetMatrix}, we can take
  $u(-C) \DefinedEqual -e_{1, n}$ and $v(-C) = e_{1, m}$, where
  $e_{1, n} \DefinedEqual (1, 0, \ldots, 0) \in \RealField^n$
  and $e_{1, m} \DefinedEqual (1, 0, \ldots, 0) \in \RealField^m$.
  This gives us $F'(-C) = -e_{1, n} e_{1, m}\Transpose$.
  Consequently, $f'(0) = -A\Adjoint (e_{1, n} e_{1, m}\Transpose)$
  is the vector with elements
  $
    [f'(0)]\UpperIndex{i}
    =
    -\DualPairing{A_i}{e_{1, n} e_{1, m}\Transpose}
    =
    -A_i\UpperIndex{1, 1}
    =
    0
  $
  (see \cref{eq:RequiremenetOnBaseMatrices})
  for any $1 \leq i \leq d$.
\end{proof}

  \printbibliography

\end{document}